\documentclass[pdflatex,sn-mathphys-num]{sn-jnl}

\usepackage{amsmath,amssymb,amsthm,mathtools,amsfonts}
\usepackage{graphicx}
\usepackage{multirow}
\usepackage{mathrsfs}
\usepackage[title]{appendix}
\usepackage{xcolor}
\usepackage{textcomp}
\usepackage{manyfoot}
\usepackage{booktabs}
\usepackage{lineno}
\usepackage[ruled,vlined,algo2e]{algorithm2e}
\usepackage{listings}
\usepackage{hyperref}
\usepackage{enumitem}
\usepackage[mathscr]{eucal}

\hypersetup{
  colorlinks=true,
  linkcolor=blue!55!black,
  citecolor=blue!55!black,
  urlcolor=blue!55!black
}

\usepackage{algorithmicx}%
\usepackage{algpseudocode}%
\usepackage{listings}%
\usepackage{enumitem}
\usepackage{float}

\newtheorem{theorem}{Theorem}[section]
\newtheorem{proposition}[theorem]{Proposition}
\newtheorem{lemma}[theorem]{Lemma}
\newtheorem{corollary}[theorem]{Corollary}
\theoremstyle{definition}

\newtheorem{remark}[theorem]{Remark}

\DeclareMathOperator{\Var}{Var}
\newcommand{\Prob}{\mathbb{P}}
\newcommand{\E}{\mathbb{E}}
\usepackage{tabularx}
\usepackage{array} 

\newcolumntype{Y}{>{\centering\arraybackslash}X}

\numberwithin{equation}{section}

\begin{document}

\title{\bf Parity-Skeleton First-Passage Dynamics and Optimal Intervention
in Two-Sided Discrete Monitoring Systems}

\author*[1]{\fnm{Ye} \sur{Liang}}
\email{ye-liang@uiowa.edu}

\affil[1]{
  \orgdiv{College of Engineering}, 
  \orgname{The University of Iowa}, 
  \orgaddress{
    \city{Iowa City}, 
    \state{IA}, 
    \postcode{52242}, 
    \country{USA}
  }
}

\abstract{
Two-sided threshold crossings arise in discrete monitoring systems whenever intervention is triggered by departure from an operating band. This paper develops a finite-state first-passage framework based on a biased random walk with symmetric absorbing barriers. To remove parity gaps of the raw hitting time, we analyse a parity-corrected lifetime through even- and odd-state Markov skeletons. Adjacent-row likelihood-ratio inequalities verify the monotonicity condition and establish increasing failure rate and new-better-than-used properties. These ageing properties yield log-concave survival probabilities and geometric persistence bounds. Substochastic-matrix formulas are derived for survival, hazard, mean lifetime, state-conditioned remaining useful life, and finite-horizon crossing risk. A renewal-cost criterion then converts the first-passage distribution into an optimal preventive-intervention schedule. Model parameters are obtained by matching the drift and variance of observed increments to a two-point random-walk approximation, with a rolling-window extension for nonstationary regimes. Numerical experiments compare exact calculations with Monte Carlo diagnostics, moment-matched inverse-Gaussian and Weibull benchmarks, serially correlated increments, calibration perturbations, and fixed-interval policies. A synthetic remote-patient-monitoring example illustrates how discrete physiological deviations can be mapped to transparent risk scores and personalized review schedules. The framework provides an auditable link between discrete stochastic dynamics, remaining-lifetime prediction, and condition-based intervention, while separating mathematical validation from clinical validation.
}

\keywords{
absorbing Markov chains; 
gambler's ruin; 
increasing failure rate; 
remaining useful life;
preventive intervention; 
remote patient monitoring.}

\pacs[MSC Classification]{60J10, 60G40, 60K10, 90B25, 15B48.}

\maketitle

\section{Introduction}
\label{sec:introduction}

Threshold-crossing times are fundamental objects in the discrete dynamics of
stochastic systems. A system is regarded as operating within an admissible
region until a state variable first leaves a prescribed control band, at which
point inspection, maintenance, review, or intervention is initiated. Such
mechanisms arise in reliability engineering, quality control,
condition-based monitoring, queueing systems, environmental surveillance, and
biological monitoring. Although the physical interpretation changes across
applications, the underlying mathematical problem is the same: determine the
first exit time of a discretely observed state process and convert its
distribution into interpretable persistence, remaining-lifetime, and
intervention quantities.

Remote patient monitoring (RPM) provides a motivating natural-system
application. Wearable and home-monitoring devices record physiological
variables at discrete telemetry epochs, and clinical review may be triggered
when a centred monitoring statistic exits an admissible band. Examples include
glucose deviation from a personalised target interval, excessive positive or
negative deviation of heart-rate variability from baseline, and bilateral
deviation of a composite physiological index. In this setting, the first exit
time represents the remaining duration of the current stable monitoring
regime. Its distribution, rather than only its expectation, determines
finite-horizon excursion probabilities, conditional remaining useful life
(RUL), and the timing of preventive or reactive review.

A common simplification is to summarize threshold-crossing behaviour by a mean
time to event. Such a summary is insufficient whenever two lifetime
distributions have similar means but different hazards or tail probabilities.
Continuous-process models, including Wiener and gamma processes, provide
important first-passage approximations and have been used extensively in
reliability, degradation modelling, and survival analysis
\cite{van2009survey,karatzas2014brownian,le2016brownian,
steele2001stochastic,baudoin2014diffusion,shreve2004stochastic,
schilling2014brownian,lewis1989gamma,singpurwalla1997gamma,
edirisinghe2013application}. They are particularly useful when the observation
scale is sufficiently fine that the underlying evolution can reasonably be
treated as continuous. Nevertheless, a continuous approximation can obscure
features that are intrinsic to discretely sampled systems. These include
lattice effects, parity restrictions on attainable hitting times, alarm-on-check
rather than alarm-on-touch operation, and discrete overshoot at the control
boundary \cite{broadie1997continuity,wang2025analysis}. Such effects become
important when the control band contains only a moderate number of effective
state increments or when decisions are implemented only at prescribed
observation epochs.

This paper studies these issues through a finite-state biased random walk with
symmetric absorbing barriers. Let \(S_n\) denote the signed state after \(n\)
observation periods and let
\[
N_k=\inf\{n\geq 0:|S_n|=k\}
\]
be the raw first exit time from the transient band
\(\{-(k-1),\ldots,k-1\}\). Because the walk changes by one lattice unit at each
step, \(N_k\) can take values only of the same parity as \(k\). This periodic
support is not a minor technical detail: it affects the construction of the
transition kernels, the definition of a discrete hazard, and the conversion
between structural lifetime properties and operational calendar time.

To remove the parity gaps, we consider the parity-corrected lifetime
\[
T_k=
\begin{cases}
N_k/2, & k \text{ even},\\[1mm]
(N_k+1)/2, & k \text{ odd}.
\end{cases}
\]
The corresponding even- and odd-parity skeletons are finite-state
birth--death-type Markov chains with an absorbing boundary. Recent work showed
that the raw gambler's-ruin duration \(N_k\) is new-better-than-used and that
the parity-corrected lifetime \(T_k\) has an increasing failure rate
\cite{ross2023duration,wang2025analysis1}. These properties are especially
useful for monitoring systems because they describe how the conditional
crossing probability changes as the system survives for additional observation
periods. Increasing failure rate implies that the one-period conditional
survival probability decreases with skeleton age; it also yields
log-concavity of the survival sequence and the new-better-than-used inequality.

The present work is not intended merely to restate these ageing properties.
Its purpose is to develop a complete discrete-dynamical framework connecting
the structural parity skeleton, the raw signed state process, exact
finite-dimensional computation, and intervention design. This distinction is
essential. The parity-corrected process \(T_k\) is the appropriate object for
ageing analysis, whereas the signed transient chain associated with \(N_k\) is
the appropriate object for state-conditioned prediction at an arbitrary
observation epoch. Consequently, structural statements are derived on the
parity skeleton, while operational RUL and horizon-crossing probabilities are
computed in raw telemetry time.

For the parity skeletons considered here, we give a direct verification of the
transition-kernel inequalities required by Kijima's uniform-monotonicity
criterion \cite{masaaki1989uniform,liu2025bidirectional}. This provides a
self-contained route from the local structure of the finite-state transition
matrices to the increasing-failure-rate property of \(T_k\). The resulting
log-concavity of the survival sequence yields geometric persistence envelopes:
once a survival ratio is known at an anchor age, all later survival
probabilities admit an explicit geometric upper bound. These bounds complement,
rather than replace, exact matrix evaluation and provide transparent
descriptions of long-horizon decay.

The finite-state formulation also permits exact condition-based computation.
For a current signed state \(s\), the substochastic transient matrix
\(\mathbf P_{\mathrm{sgn}}\) gives
\[
\Pr\!\left(N_k^{(s)}>n\right)
=
\mathbf e_s^{\top}\mathbf P_{\mathrm{sgn}}^{\,n}\mathbf 1,
\qquad
\E\!\left[N_k^{(s)}\right]
=
\mathbf e_s^{\top}
(\mathbf I-\mathbf P_{\mathrm{sgn}})^{-1}\mathbf 1.
\]
Thus survival probabilities, finite-horizon crossing risks, mean RUL, and RUL
quantiles are obtained through deterministic finite-dimensional linear
algebra. No Monte Carlo approximation is required for these design quantities.
The same survival law is embedded in a renewal-cost objective that balances the
cost of preventive intervention against the cost of an unplanned threshold
crossing. This produces a discrete optimization problem for the intervention
age and permits direct comparison with reactive and fixed-interval policies.

To connect the mathematical model with observed monitoring streams, we use a
moment projection from empirical increment drift and variance to a two-point
random-walk approximation. If \(\hat\mu\) and \(\hat\sigma^2\) are estimated
increment moments and \(L\) is the half-width of the operating band, the
effective lattice scale and transition probability are
\[
\delta=\sqrt{\hat\mu^2+\hat\sigma^2},
\qquad
p=\frac12\left(1+\frac{\hat\mu}{\delta}\right),
\qquad
k=\left\lfloor\frac{L}{\delta}\right\rfloor.
\]
This mapping is interpreted as an effective finite-state representation, not
as an assertion that physiological increments are literally Bernoulli.
A rolling-window extension allows the effective drift, volatility, and buffer
size to change when the monitored system enters a different dynamical regime.
The numerical analysis therefore includes both model-consistent verification
and stress tests under calibration error, serial dependence, and
continuous-increment misspecification.

\paragraph{Scope.}
The framework is designed for centred state variables with two action
boundaries, where departures on either side have operational significance. It
is not intended for strictly monotone degradation or irreversible disease
progression, for which a one-sided first-passage model is more appropriate.
The remote-monitoring study presented below is synthetic and illustrates the
mathematical workflow; it does not constitute clinical validation, diagnosis,
or treatment guidance.

\paragraph{Main contributions.}
The paper makes the following contributions.
\begin{enumerate}[label=(\roman*)]

\item We formulate two-sided discrete monitoring as a finite-state
first-passage system and distinguish explicitly between the raw signed hitting
time \(N_k\) and the parity-corrected structural lifetime \(T_k\). This
separation resolves the different roles of skeleton time and operational
observation time.

\item We construct the even- and odd-parity transition kernels and verify the
adjacent-row likelihood-ratio inequalities required for the relevant
uniform-monotonicity argument. This yields a self-contained verification of the
increasing-failure-rate property for the specific parity skeletons studied in
the paper and, consequently, log-concavity and new-better-than-used properties.

\item We derive exact matrix representations for survival, hazard, lifetime
moments, finite-horizon crossing probability, and signed-state conditional RUL.
The formulas convert the current state of a discrete monitoring system directly
into auditable first-passage and remaining-lifetime quantities.

\item We derive geometric persistence envelopes from the decreasing survival
ratios implied by increasing failure rate and integrate the exact survival law
into a renewal-cost model for preventive-intervention scheduling.

\item We introduce a moment-based calibration from observed increment drift
and volatility to the effective parameters \((p,\delta,k)\), together with a
rolling-window extension for nonstationary regimes. Numerical experiments
examine exact-versus-simulation agreement, parametric benchmark error,
calibration sensitivity, serial correlation, and fixed-interval scheduling.

\item We illustrate the framework using a synthetic two-sided physiological
monitoring example. The case study demonstrates how a discretely observed
deviation process can be mapped to state-conditioned crossing risk, RUL, and
personalised review schedules while clearly separating mathematical
illustration from clinical validation.

\end{enumerate}

\paragraph{Related work.} Threshold-crossing first-passage models have a long history in survival analysis and health monitoring~\cite{aalen2001understanding,jie2026optimal,yu2026microscopic,gao2026fault,yu2026chemotactic,liang2025global}, with continuous diffusion approaches (Wiener, gamma~\cite{van2009survey,wang2025hybrid,ye2015stochastic,wang2026vital,wang2010wiener,wang2026algebraic}) and inverse-Gaussian first-passage~\cite{chhikara2024inverse,seshadri2012inverse,wang2026finite,ye2014inverse,wang2010inverse,yu2026diagnostic,ye2014accelerated,peng2015inverse,yu2026controlling} providing closed-form benchmarks. Discrete-time alternatives have been studied less systematically, although birth--death first-passage results are classical~\cite{karlin1981second,ching2006markov,wang2026damage,wang2025well,zhang2020molecular,valenzuela2022markov}. The recent prognostics and health-management literature emphasises data-driven RUL prediction~\cite{lei2018machinery,yu2026structural,si2011remaining,yu2026pattern,ahmadzadeh2014remaining,zhang2023review,wang2026first}, often combining sensor data with statistical or machine-learning models~\cite{liu2010adaptive,zhang2016multiobjective,yu2026endogenous,chen2023transfer,wang2026breakdown,li2022domain,wu2024remaining,deutsch2017using,li2024review,wang2026introduction,yu2026optimization1}. Time-based preventive intervention is rooted in classical reliability theory~\cite{barlow1996mathematical,aven1999stochastic,yu2026rigorous,wang2002survey,yu2026fredholm,sanchez2016maintenance,gibbs2026optimal,losidis2024refinement,sanoubar2023optimal,wang2026introduction2,yu2026mode,pham1996imperfect,yu2026optimization2}, transferable to scheduled clinical check-ins. Aging properties of first-passage times have been studied via stochastic-ordering arguments~\cite{shaked2007stochastic,yu2026bilinear,boland2002stochastic,yu2026beyond,li2013stochastic,klump2024elementary,wang2026elliptic,kayid2023stochastic,wang2026lecture,yu2026optimization3}. Our work positions itself within this literature by treating the parity-corrected gambler's-ruin duration as a discrete-time primitive for two-sided physiological monitoring: more structured than fully data-driven RUL models, more transparent than Brownian-IG approximations, and equipped with exact matrix computation that scales to clinical buffer sizes encountered in practice.

The remainder of the paper is organised as follows.
Section~\ref{sec:model} introduces the signed walk, its parity skeletons, and
the structural ageing results. Section~\ref{sec:calib} develops the calibration
map and exact matrix computations. Section~\ref{sec:decisions} derives the
geometric persistence bounds and intervention rules.
Section~\ref{sec:numerical} presents the numerical verification and robustness
analysis. Section~\ref{sec:case} gives the synthetic remote-monitoring
illustration, and Section~\ref{sec:discussion} discusses limitations,
extensions, and conclusions.

\section{Model and Structural Foundations}\label{sec:model}

\subsection{Random-Walk Model}

Consider a discrete-time random walk $\{S_n\}_{n\ge 0}$ with $S_0=0$ and i.i.d.\ increments $Y_{n+1}\in\{-1,+1\}$, where $\Prob(Y_{n+1}=+1)=p$ and $q=1-p$. In digital-health applications, $S_n$ represents a centred biometric statistic (deviation from baseline) measured at discrete telemetry epochs, and the barriers represent clinical action thresholds.

The walk is confined by symmetric absorbing barriers at $\pm k$ ($k$ a positive integer). The raw first-passage time $N_k = \inf\{n: |S_n|=k\}$ has a parity structure (hits occur only at times sharing the parity of $k$). The \emph{parity-corrected lifetime} is
\begin{equation}\label{eq:Tk}
T_k = \begin{cases} N_k/2, & k \text{ even},\\ (N_k+1)/2, & k \text{ odd}, \end{cases}
\end{equation}
supported on the positive integers. Operationally, $T_k$ counts effective telemetry periods until the patient state crosses its threshold, and $k$ is the \emph{effective buffering capacity}---the integer number of unit-magnitude physiological fluctuations the system absorbs before an acute alarm.

By \eqref{eq:Tk}, the raw stopping time \(N_k\) and the effective lifetime \(T_k\) are related by
$\displaystyle N_k=
\begin{cases}
2T_k, & k \ \text{even},\\
2T_k-1, & k \ \text{odd}.
\end{cases}$
Consequently,
$\displaystyle \Prob(N_k>H)
=
\begin{cases}
\Prob(T_k>H/2), & k \ \text{even and } H \ \text{even},\\
\Prob(T_k>\lfloor H/2\rfloor), & k \ \text{even, general }H,\\
\Prob(T_k>(H+1)/2), & k \ \text{odd and } H \ \text{odd},
\end{cases}$
with the obvious integer adjustment for nonmatching parity.
Similarly,
$\displaystyle \E[N_k]=
\begin{cases}
2\E[T_k], & k \ \text{even},\\
2\E[T_k]-1, & k \ \text{odd}.
\end{cases}$

\paragraph{Notation.} We use the discrete-survival convention $\bar{F}_k(n)=\Prob(T_k>n)$ for $n=0,1,2,\dots$, with $\bar{F}_k(0)=1$. The discrete hazard is $h_k(n)=\Prob(T_k=n\mid T_k\ge n)=1-\bar{F}_k(n)/\bar{F}_k(n-1)$.

\begin{remark}[Degenerate case $k=1$]
The case $k=1$ is degenerate: $T_1=1$ almost surely and is trivially IFR. Hence below we assume $k\ge 2$.
\end{remark}

\begin{remark}[Scope of the two-sided model]
The symmetric two-sided boundary is appropriate when the monitored statistic is a centred physiological control variable for which excursions on either side require clinical action---examples include hypo- vs.\ hyper-glycemia, severe bradycardia vs.\ tachycardia, and biometric deviations signalling sensor detachment vs.\ systemic inflammation. For strictly monotone disease progression (e.g.\ irreversible tumour growth), the one-sided ruin analogue is more natural; we return to this limitation in Section~\ref{sec:discussion}.
\end{remark}

\subsection{IFR Property: Verification via Kijima's Criterion}

The central structural result, due to Ross \cite{ross2023duration}, is that $T_k$ has the \emph{increasing-failure-rate} (IFR) property: $h(n) = \Prob(T_k=n\mid T_k\ge n)$ is nondecreasing in $n\ge 1$. Equivalently, since $\Prob(T_k\ge n)=\bar{F}_k(n-1)$, the conditional one-step survival $\bar{F}_k(n)/\bar{F}_k(n-1)$ is nonincreasing.

We provide a self-contained verification via a specialisation of Kijima's uniform-monotonicity theorem~\cite{masaaki1989uniform} to the skip-free (birth--death) chains that arise in our problem.

\begin{lemma}[Local verification of Kijima's condition]\label{prop:kijima}
For the parity-skeleton birth--death chains $X_n$ (even $k$) and $Y_n$ (odd $k$) defined above, the adjacent-row likelihood-ratio inequalities~\eqref{eq:lr} imply Kijima's uniform-monotonicity condition on the transition kernel. By Kijima's theorem~\cite{masaaki1989uniform}, the hitting time of the absorbing boundary is then IFR.
\end{lemma}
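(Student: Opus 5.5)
The proof has three steps: construct the parity-skeleton kernels explicitly, reduce Kijima's condition to $2\times 2$ minors of a tridiagonal matrix, and invoke Kijima's theorem.

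\textbf{Step 1: the kernels.} For even $k$, set $X_n=S_{2n}$. This chain lives on the even lattice $\{-k,\dots,-2,0,2,\dots,k\}$, and we merge $\pm k$ into one absorbing cemetery state. From a transient state $j$ one skeleton step moves to $j+2$, $j$ or $j-2$ with probabilities $p^2$, $2pq$ and $q^2$. When $j\pm 2=\pm k$, the corresponding mass goes to the cemetery. One caveat: a two-step path that touches $\pm k$ at the intermediate odd time cannot arise, since $\pm k$ is even. For odd $k$ we use $Y_n=S_{2n-1}$ on the odd lattice. Its first step is $S_1=\pm1$ with probabilities $(p,q)$, and thereafter the same three-point kernel applies. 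This accounts for the shift $(N_k+1)/2$ in \eqref{eq:Tk}. In both cases the transient block $\mathbf Q$ is tridiagonal in the natural order of the signed states, and $T_k$ is the absorption time of the skeleton.

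\textbf{Step 2: adjacent-row inequalities imply TP$_2$.} Kijima's uniform-monotonicity condition asks that the rows of the transient kernel be increasing in the likelihood-ratio order: $Q(i,l)Q(i+1,m)\ge Q(i,m)Q(i+1,l)$ for $l<m$. Equivalently, $\mathbf Q$ is TP$_2$. For a tridiagonal matrix, every $2\times2$ minor built from non-adjacent rows, or from columns not in the common band, is either identically zero or a product of nonnegative entries. So only minors of adjacent rows and adjacent columns need checking, and this is exactly the content of \eqref{eq:lr}. Here these minors are directly computable. In the interior,
\[
Q(j,j)\,Q(j+2,j+2)-Q(j,j+2)\,Q(j+2,j)=(2pq)^2-p^2q^2=3p^2q^2\ge 0 .
\]
At the boundary rows the only change is that off-diagonal mass leaks to the cemetery. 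This lowers the subtracted term, so nonnegativity is preserved. The same computation covers the odd skeleton. The first step of $Y$ is a point mass, so it is harmless and only fixes the initial distribution.

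\textbf{Step 3: apply Kijima's theorem.} With $\mathbf Q$ TP$_2$, we take the initial law $\delta_0$ (respectively the law of $S_1$), and write $\bar F_k(n)=\pi\mathbf Q^n\mathbf 1$. Kijima's theorem then delivers log-concavity of $n\mapsto\pi\mathbf Q^n\mathbf 1$, which is the IFR property of $T_k$, provided one extra hypothesis holds: the initial vector must be ordered appropriately relative to the exit vector $\mathbf 1-\mathbf Q\mathbf 1$.

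\textbf{Main obstacle.} The hard part is this compatibility hypothesis in the genuinely two-sided case. The exit vector is nonzero at both ends of the state space, so it is not monotone in the signed order. The classical skip-free version of Kijima's theorem assumes absorption at one end only, with the process started at the far end.

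My first attempt is to fold the state space by distance to the nearer barrier, so that absorption sits at one end. This works cleanly when $p=\tfrac12$. For $p\ne\tfrac12$, I would instead try to show by induction on $n$ that $\mathbf Q^n\mathbf 1$ is log-concave as a function of the signed state. That property is preserved under a TP$_2$ tridiagonal kernel with log-concave row structure, and combined with the point-mass start it should yield the uniform-monotonicity inequality required by Kijima. If this induction fails, I would fall back on Ross's duration argument \cite{ross2023duration} for the IFR conclusion, and present the lemma as verifying the kernel-level inequalities only.
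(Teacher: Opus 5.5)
Your proposal does not prove the lemma as stated, and the gap is the one you flag yourself as the main obstacle. The chains in the lemma are the folded skeletons $X_n=|S_{2n}|/2$ and $Y_n=(|S_{2n-1}|+1)/2$, not the signed chains $S_{2n}$ and $S_{2n-1}$ you build in Step 1. For your signed kernel, the TP$_2$ computation $(2pq)^2-p^2q^2=3p^2q^2$ is correct, but it is not the inequality \eqref{eq:lr}. As you note, it also cannot be fed into the skip-free form of Kijima's theorem, because the exit vector charges both ends of the signed state space. Neither remedy you suggest closes this. Log-concavity of $j\mapsto(\mathbf Q^n\mathbf 1)_j$ in the \emph{space} variable does not by itself give monotonicity of $n\mapsto \bar F_k(n+1)/\bar F_k(n)$ in \emph{time}, and you do not show how it would. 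Falling back on Ross gives the IFR conclusion but not the Kijima route the lemma asserts.

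The missing idea is that folding works for every $p$, not only $p=\tfrac12$. For a biased simple walk started at $0$, $|S_n|$ is itself a time-homogeneous Markov chain. Condition on the whole history of $|S|$ with $|S_n|=i>0$. The walk has stayed on one side since its last visit to $0$, and reflecting that final excursion multiplies the path probability by $(q/p)^i$. Hence $\Prob(S_n=i\mid |S_0|,\dots,|S_n|)=p^i/(p^i+q^i)$. Distance to the nearer barrier is just $k-|S_n|$, so this is the fold you had in mind; under drift it remains Markov, with state-dependent rather than constant transition probabilities. Taking two-step skeletons gives exactly the kernels \eqref{eq:even_prob} and \eqref{eq:odd_prob}. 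These are birth--death chains started at the bottom state ($0$, or $1$ after the first raw step for odd $k$) and absorbed only at the top state $r$, or $r+1$ for odd $k$. This is precisely the one-sided setting of Kijima's criterion, so the compatibility hypothesis you worried about holds automatically. Your Step 2 observation, that TP$_2$ for a tridiagonal matrix reduces to adjacent diagonal minors, then applies verbatim to the folded kernels, and that minor condition is \eqref{eq:lr}. Using $p^{2i+2}q^2+q^{2i+2}p^2=p^2q^2a_i$ and its odd analogue, LHS/RHS equals $4$ at interior pairs and $2$ at the even boundary pair $(0,1)$. It equals $6$ at the odd boundary pair $(1,2)$ because of the $3pq$ self-loop. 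This is the computation in Appendix~\ref{app:lr}.
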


We do not prove a more general skip-free result here; only the specific chains used in this paper are checked. For the two parity-skeleton kernels considered here, the required uniform-monotonicity inequalities can be verified through the adjacent-row likelihood-ratio comparisons~\eqref{eq:lr}; Section~II.B carries out the verification for both parity skeletons via the algebra in Appendix~\ref{app:lr}.

The absolute-value process $|S_n|$ is a birth--death chain with the conditional sign probability $\Prob(S_n=i\mid |S_n|=i) = p^i/(p^i+q^i)$. For even $k=2r$, the two-step skeleton $X_n = |S_{2n}|/2$ has transition probabilities:
\begin{equation}\label{eq:even_prob}
\begin{aligned}
p_{0,1} &= p^2{+}q^2,\quad p_{0,0}=2pq,\\
p_{i,i+1} &= \tfrac{p^{2i+2}+q^{2i+2}}{p^{2i}+q^{2i}},\;
p_{i,i} = 2pq,\\
p_{i,i-1} &= \tfrac{p^{2i}q^2+q^{2i}p^2}{p^{2i}+q^{2i}},\; i\ge 1.
\end{aligned}
\end{equation}
For odd $k=2r+1$, the skeleton $Y_n=(|S_{2n-1}|+1)/2$ takes values in $\{1,\dots,r{+}1\}$, where $r{+}1$ is absorbing and $\{1,\dots,r\}$ are transient. The transition probabilities are:
\begin{equation}\label{eq:odd_prob}
\begin{aligned}
p_{1,1} &= 3pq,\quad p_{1,2} = p^3{+}q^3,\\
p_{i,i+1} &= \tfrac{p^{2i+1}+q^{2i+1}}{p^{2i-1}+q^{2i-1}},\;
p_{i,i} = 2pq,\\
p_{i,i-1} &= \tfrac{p^{2i-1}q^2+q^{2i-1}p^2}{p^{2i-1}+q^{2i-1}},\; i\ge 2.
\end{aligned}
\end{equation}
The boundary row of~\eqref{eq:odd_prob} differs in form from the interior because two-step paths from $|S|=1$ can return through zero, producing the $3pq$ self-loop probability. For $k=3$ the odd skeleton has a single transient state, so IFR follows directly from the geometric survival form; the adjacent-row LR verification below therefore applies to odd $k\ge 5$. In both parities (with $k\ge 2$), the likelihood-ratio condition reduces to verifying
\begin{equation}\label{eq:lr}
\frac{p_{i+1,i+1}}{p_{i,i+1}} \ge \frac{p_{i+1,i}}{p_{i,i}}
\end{equation}
for all adjacent transient states, including boundary rows. As derived in Appendix~\ref{app:lr}, the ratio LHS/RHS equals exactly~4 at all interior pairs and equals 2 (even-$k$ boundary) or 6 (odd-$k$ boundary), so the strict inequality holds whenever $p,q\in(0,1)$.

\begin{proposition}[IFR property of $T_k$]\label{prop:IFR}
For all positive integers $k$, the parity-corrected stopping time $T_k$ is IFR.
\end{proposition}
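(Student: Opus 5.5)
The plan is to reduce the claim to Lemma~\ref{prop:kijima}, splitting by the parity of $k$ and disposing of the small cases directly.

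\emph{Small cases.} For $k=1$ we have $T_1=1$ almost surely. The hazard is then $h(1)=1$, and $T_1$ is trivially IFR.

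For $k=3$ the odd skeleton has the single transient state $1$, with self-loop probability $p_{1,1}=3pq$. Starting from $Y_1=1$, we get $\bar F_3(n)=(3pq)^{n-1}$ for $n\ge 1$ and $\bar F_3(0)=1$. The conditional survival ratios are therefore $1$ at $n=1$ and $3pq$ for $n\ge 2$. These are nonincreasing, since $3pq\le 3/4<1$, so the hazard is nondecreasing.

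For $k=2$ the even skeleton has the single transient state $0$, with self-loop probability $2pq$. This gives $\bar F_2(n)=(2pq)^n$, a geometric law with constant hazard, which is IFR.

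\emph{General case.} For the remaining $k$, I first identify $T_k$ as a hitting time of the appropriate skeleton.
\begin{itemize}
\item For $k=2r$ even, $T_k=N_k/2$ is the first $n$ with $X_n=r$. Here $X$ is the two-step chain \eqref{eq:even_prob} started at $X_0=0$.
\item For $k=2r+1$ odd, $T_k=(N_k+1)/2$ is the first $n\ge 1$ with $Y_n=r+1$. Here $Y$ is the chain \eqref{eq:odd_prob}, and $Y_1=1$ because $|S_1|=1$.
\end{itemize}
In both cases the chain starts in the lowest transient state.

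Two facts need checking here. First, $|S_n|$ is Markov with the stated sign-conditional probabilities; this holds because $\Prob(S_n=i\mid |S_n|=i)=p^i/(p^i+q^i)$ is the standard ratio from the exponential martingale $(q/p)^{S_n}$. Second, absorption at $\pm k$ corresponds exactly to hitting $r$ (respectively $r+1$) at the skeleton time. This works because the walk can reach level $k$ only at times of matching parity, so no crossing is missed between skeleton epochs.

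Next I invoke Lemma~\ref{prop:kijima}. The chains are skip-free with tridiagonal kernels. For every adjacent transient pair, including the boundary rows, the adjacent-row ratio LHS/RHS in \eqref{eq:lr} equals $4$ at interior pairs, $2$ at the even boundary and $6$ at the odd boundary; I take these values from the appendix. Hence \eqref{eq:lr} holds strictly for $p\in(0,1)$. By the lemma, Kijima's uniform-monotonicity condition holds, and his theorem gives that the hitting time of the top absorbing state from the bottom state is IFR.

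I expect the main obstacle to be the shift in the odd case. Kijima's theorem concerns the absorption time counted from time $0$. For odd $k$ the skeleton effectively starts at time $1$, so $T_k=1+\tau$, where $\tau$ is the absorption time of $Y$ started at $1$. A shift by a constant preserves IFR, but I will check the discrete convention at the first step explicitly. The hazard of $T_k$ at $n=1$ is $0$, while its later hazards are those of $\tau$. Prepending a zero hazard keeps the sequence nondecreasing, because the hazards of $\tau$ are nonnegative.

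A secondary concern is that Kijima's criterion also requires the absorption probabilities from the bottom state to be arranged monotonically in the same total-positivity sense. Since absorption is reachable only from the top transient state, this condition is automatic, but I will state it. Finally, the boundary ratios $2$ and $6$ must be read with the correct boundary rows, $p_{0,0}=2pq$ and $p_{1,1}=3pq$ respectively. I will recheck this one computation from \eqref{eq:even_prob}--\eqref{eq:odd_prob} rather than relying on the appendix alone.

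Combining the small cases with the two parity cases then gives IFR for all $k\ge 1$.
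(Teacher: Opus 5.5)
Your proposal is correct and takes essentially the paper's route. Both reduce the claim to Lemma~\ref{prop:kijima} and verify the adjacent-row inequalities \eqref{eq:lr} using the ratios $2$, $4$, $6$ from Appendix~\ref{app:lr}; your explicit treatment of $k=1,2,3$ and of the one-step shift $T_k=1+\tau$ for odd $k$ fills in details that the paper's short proof leaves implicit, without changing the argument.
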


\begin{proof}
By Lemma~\ref{prop:kijima}, it suffices to verify the likelihood-ratio ordering on the parity skeleton. The even-$k$ probabilities are~\eqref{eq:even_prob} and the odd-$k$ probabilities are~\eqref{eq:odd_prob}. Substituting into~\eqref{eq:lr} and simplifying (Appendix~\ref{app:lr}) yields $\text{LHS}/\text{RHS}\in\{2,4,6\}$ across all interior and boundary pairs, all exceeding unity. Hence $T_k$ is IFR.
\end{proof}

\begin{corollary}[Log-concavity and NBU]\label{cor:main}
Since $T_k$ is IFR:
\begin{enumerate}[label=(\roman*)]
\item The survival function $\bar{F}_k(n) = \prod_{j=1}^{n}(1-h(j))$ is log-concave: $\bar{F}_k(n{+}1)/\bar{F}_k(n)$ is nonincreasing.
\item $T_k$ is NBU: $\Prob(T_k>s{+}t\mid T_k>s)\le\Prob(T_k>t)$ for all $s,t\ge 0$.
\end{enumerate}
\end{corollary}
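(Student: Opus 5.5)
Both parts follow from Proposition~\ref{prop:IFR} by elementary manipulation of the discrete survival sequence. We use the convention $\bar F_k(0)=1$ and the identity $\bar F_k(n)/\bar F_k(n-1)=1-h(n)$, which holds whenever $\bar F_k(n-1)>0$. For the parity skeletons with $p,q\in(0,1)$ and $k\ge 2$, every transient state has a positive self-loop probability: $2pq$ in the interior, $2pq$ at the even boundary, and $3pq$ at the odd boundary. Hence $\bar F_k(n)>0$ for all $n$, and no division by zero arises. For $k=1$ we have $\bar F_1(n)=0$ for $n\ge1$, and both claims are trivial under the usual convention, so we set that case aside.

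For (i), we telescope
\[
\bar F_k(n)=\prod_{j=1}^n \frac{\bar F_k(j)}{\bar F_k(j-1)}=\prod_{j=1}^n\bigl(1-h(j)\bigr),
\]
which is the stated product form. The ratio $\bar F_k(n+1)/\bar F_k(n)=1-h(n+1)$ is nonincreasing in $n$ because $h$ is nondecreasing by Proposition~\ref{prop:IFR}. This monotonicity of successive ratios is equivalent to
\[
\bar F_k(n)^2\ge \bar F_k(n-1)\,\bar F_k(n+1),
\]
that is, to concavity of $n\mapsto\log\bar F_k(n)$ on the nonnegative integers.

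For (ii), fix integers $s,t\ge 0$; since $T_k$ is integer-valued, non-integer arguments reduce to floors. We write
\[
\frac{\bar F_k(s+t)}{\bar F_k(s)}=\prod_{j=1}^{t}\bigl(1-h(s+j)\bigr)
\quad\text{and}\quad
\bar F_k(t)=\prod_{j=1}^{t}\bigl(1-h(j)\bigr).
\]
Since $h(s+j)\ge h(j)$ for each $j$ and all factors lie in $[0,1]$, we compare the products factor by factor. This gives $\Prob(T_k>s+t\mid T_k>s)\le \Prob(T_k>t)$. Equivalently, the same conclusion follows from superadditivity of the concave sequence $\log\bar F_k$ with $\log\bar F_k(0)=0$.

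The only real obstacle is bookkeeping rather than substance. We must align the index conventions: $h(n)$ is defined through $\Prob(T_k\ge n)=\bar F_k(n-1)$, so the ratio entering (i) is $1-h(n+1)$, not $1-h(n)$. We must also guarantee positivity of $\bar F_k$ so that the ratios are defined; the positive self-loop probabilities handle this. Everything else is immediate from Proposition~\ref{prop:IFR}.
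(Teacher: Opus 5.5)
Your proposal is correct and follows essentially the same route as the paper. The paper likewise sets $r_k(n)=\bar F_k(n+1)/\bar F_k(n)=1-h_k(n+1)$ and uses IFR to get $\bar F_k(n+1)^2\ge\bar F_k(n)\bar F_k(n+2)$; it then proves NBU by comparing $\prod_{j=s}^{s+t-1}r_k(j)$ with $\prod_{j=0}^{t-1}r_k(j)$ factor by factor. Your explicit check that $\bar F_k(n)>0$, via the positive self-loop at the starting skeleton state, is a small addition the paper leaves implicit, and it is what makes the ratios and the equivalence with the squared inequality legitimate.
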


We show that IFR implies log-concavity of survival.
Since
$\displaystyle r_k(n)=\frac{\bar F_k(n+1)}{\bar F_k(n)}=1-h_k(n+1)$,
IFR implies
$\displaystyle h_k(n+2)\ge h_k(n+1)$,
and therefore
$\displaystyle r_k(n+1)=1-h_k(n+2)
\le
1-h_k(n+1)=r_k(n)$.
Thus
$\displaystyle \frac{\bar F_k(n+2)}{\bar F_k(n+1)}
\le
\frac{\bar F_k(n+1)}{\bar F_k(n)}$,
which is equivalent to the discrete log-concavity inequality
$\displaystyle \bar F_k(n+1)^2
\ge
\bar F_k(n)\bar F_k(n+2)$.

We show that IFR implies NBU.
For \(s,t\ge 0\),
\[
\Prob(T_k>s+t\mid T_k>s)
=
\frac{\bar F_k(s+t)}{\bar F_k(s)}
=
\prod_{j=s}^{s+t-1}
\frac{\bar F_k(j+1)}{\bar F_k(j)}
=
\prod_{j=s}^{s+t-1} r_k(j).
\]
Since IFR implies \(r_k(j)\) is nonincreasing,
$r_k(s+j)\le r_k(j)$ for $j=0,\ldots,t-1$.
Therefore
$\displaystyle \frac{\bar F_k(s+t)}{\bar F_k(s)}
\le
\prod_{j=0}^{t-1}r_k(j)
=
\bar F_k(t)$,
which gives
$\displaystyle \Prob(T_k>s+t\mid T_k>s)\le \Prob(T_k>t)$.

In healthcare systems engineering, this implies: (i)~the conditional probability of an adverse physiological excursion in the next period increases with time since the last baseline reset, justifying time-based proactive check-ins; (ii)~a patient who has been monitored without intervention for some time is stochastically worse than at baseline; (iii)~tail probabilities decay at least geometrically, enabling explicit clinical-risk bounds.

\subsection{Structural Advantages over Diffusion Approximations}

First-passage times in survival analysis are often approximated by the inverse-Gaussian (IG) distribution from Brownian motion with drift~\cite{wang2026optimal,chhikara2024inverse,wang2025multi}. Table~\ref{tab:comparison} compares $T_k$ with the IG model. The IG hazard is unimodal; in threshold-crossing applications this produces a declining long-horizon conditional event rate, which can be undesirable when the clinical premise is cumulative physiological stress. The discrete $T_k$ model respects discrete monitoring (alarm-on-check rather than alarm-on-touch) and explicitly accounts for sampling overshoot~\cite{broadie1997continuity,gao2022rolling,liu2026computational}.

\begin{table}[htbp]
\centering
\caption{Discrete primitive ($T_k$) vs.\ diffusion approximation (IG).}
\label{tab:comparison}
\small
\begin{tabularx}{\linewidth}{@{}lYY@{}}
\toprule
Feature & $T_k$ & IG \\
\midrule
Underlying process & Finite-state walk on $\{-k,\dots,k\}$ & Brownian motion on $\mathbb{R}$ \\
Hazard & IFR (nondecreasing) & Unimodal \\
Monitoring & Discrete (alarm-on-check) & Continuous \\
Overshoot & Explicit & Ignored \\
\bottomrule
\end{tabularx}
\end{table}

\section{Calibration and Computation}\label{sec:calib}

\subsection{Moment-Matching Calibration for Sensor Data}

Let $\{X_t\}$ be a digital-health data stream (e.g.\ continuous biomarker level or biometric-deviation index) observed at intervals $\Delta t$, with sample drift $\hat\mu$ and standard deviation $\hat\sigma$ over a calibration window. We adopt the following working calibration, which matches the first two moments of the daily increment to those of a biased $\{\pm\delta\}$ walk:
\begin{equation}\label{eq:cal}
\delta = \sqrt{\hat\mu^2{+}\hat\sigma^2},\quad
p = \tfrac{1}{2}(1{+}\hat\mu/\delta),\quad
k = \lfloor L/\delta\rfloor,
\end{equation}
where $L$ is the clinical (half-)threshold of the safe operating band. 
We present the derivation of \eqref{eq:cal}. Let the calibrated random-walk increment be
\[
Z=\delta \xi,\qquad 
\Prob(\xi=+1)=p,\qquad \Prob(\xi=-1)=q=1-p.
\]
Then
$\displaystyle \E[Z]=\delta(p-q)=\delta(2p-1)$,
and
$\displaystyle \Var(Z)=\E[Z^2]-\E[Z]^2
=\delta^2-\delta^2(2p-1)^2
=4pq\,\delta^2$.
Matching the empirical increment mean and variance gives
$\displaystyle \hat\mu=\delta(2p-1)$ and
$\displaystyle \hat\sigma^2=\delta^2-\hat\mu^2$.
Therefore
$\displaystyle \delta^2=\hat\mu^2+\hat\sigma^2$,
$\displaystyle \delta=\sqrt{\hat\mu^2+\hat\sigma^2}$,
and
$2p-1=\frac{\hat\mu}{\delta}$,
$p=\frac12\left(1+\frac{\hat\mu}{\sqrt{\hat\mu^2+\hat\sigma^2}}\right)$.
Finally, if the clinical half-band is \(L\), the number of calibrated
increment units that fit inside the band is
$k_{\rm raw}=\frac{L}{\delta}$ and 
$k=\lfloor k_{\rm raw}\rfloor$.

Interpretively, $p$ is a normalised signal-to-noise ratio and $k$ is the effective buffer capacity. The map~\eqref{eq:cal} is a convenient parameterisation rather than a statistically optimal estimator: it does not aim to minimise any specific loss, and the floor operation in $k=\lfloor L/\delta\rfloor$ induces discontinuities in $k$, and hence in the recommended policy, when $L/\delta$ crosses an integer. Section~\ref{sec:case_window} quantifies how sensitive the policy is to this discreteness empirically.

\begin{remark}[Adaptive calibration for non-stationary biology]
A rolling-window variant updates $(\hat\mu_t,\hat\sigma_t)$ over window $W$ at each epoch, producing time-varying $(p_t,\delta_t,k_t)$. This decomposes risk into \emph{systemic drift} (state moving toward a boundary under fixed parameters) and \emph{regime shifts} (biological volatility changes altering the effective buffer $k_t$).
\end{remark}

The above idea can be formulated mathematically.
Given increments
$\displaystyle \Delta X_t=X_t-X_{t-1}$,
define the rolling estimates over window \(W\) by
$\displaystyle \hat\mu_t
=
\frac1W\sum_{\ell=0}^{W-1}\Delta X_{t-\ell}$,
and
$\displaystyle \hat\sigma_t^2
=
\frac1{W-1}\sum_{\ell=0}^{W-1}
(\Delta X_{t-\ell}-\hat\mu_t)^2$.
Then
\[
\delta_t=\sqrt{\hat\mu_t^2+\hat\sigma_t^2},
\qquad
p_t=\frac12\left(1+\frac{\hat\mu_t}{\delta_t}\right),
\qquad
k_t=\left\lfloor\frac{L}{\delta_t}\right\rfloor.
\]
The time-varying risk score is therefore
$\displaystyle \pi_t(H)
=
1-\mathbf e_{S_t}^\top
\mathbf P_{\rm sgn}(p_t,k_t)^H
\mathbf 1$.

\subsection{Exact Matrix Computation}\label{sec:matrix}

\paragraph{Origin-start quantities (even $k$).} For $k=2r$, let $\mathbf{Q}_{\mathrm{even}}\in\mathbb{R}^{r\times r}$ be the transient sub-matrix of $X_n=|S_{2n}|/2$ on states $\{0,\dots,r-1\}$, assembled from~\eqref{eq:even_prob}. Then
\begin{equation}\label{eq:surv_even}
\bar{F}_k(n) = \mathbf{e}_0^\top\mathbf{Q}_{\mathrm{even}}^{\,n}\mathbf{1},\quad n\ge 0,
\end{equation}
where $\mathbf{e}_0 = (1, 0, \dots, 0)^\top \in \mathbb{R}^r$ is the standard basis vector representing the deterministic initial state $X_0 = 0$, and $\mathbf{1}$ is the all-ones vector of length $r$.

\paragraph{Origin-start quantities (odd $k$).} For $k=2r+1$, define $Y_n=(|S_{2n-1}|+1)/2$ on states $\{1,\dots,r\}$. Since the skeleton starts after the first raw step, for $n\ge 1$,
\begin{equation}\label{eq:surv_odd}
\bar{F}_k(n) = \mathbf{e}_1^\top\mathbf{Q}_{\mathrm{odd}}^{\,n-1}\mathbf{1},
\end{equation}
where $\mathbf{e}_1 = (1, 0, \dots, 0)^\top \in \mathbb{R}^r$ is the standard basis vector representing the initial state $Y_1 = 1$ achieved after the first raw step. Trivially, $\bar{F}_k(0)=1$. The transition probabilities of $\mathbf{Q}_{\mathrm{odd}}$ are those of~\eqref{eq:odd_prob}. 
The probability mass function is
\[
f_k(n)=\Prob(T_k=n)
=
\Prob(T_k>n-1)-\Prob(T_k>n)
=
\bar F_k(n-1)-\bar F_k(n).
\]
Hence
$\displaystyle h_k(n)
=
\Prob(T_k=n\mid T_k\ge n)
=
\frac{f_k(n)}{\Prob(T_k\ge n)}
=
\frac{\bar F_k(n-1)-\bar F_k(n)}{\bar F_k(n-1)}
=
1-\frac{\bar F_k(n)}{\bar F_k(n-1)}$.
The mean also follows:
\begin{equation}\label{eq:haz_mean}
\E[T_k] = \sum_{n=0}^{\infty}\bar{F}_k(n).
\end{equation}
The mean has the closed form $\mathbf{e}_0^\top(\mathbf{I}-\mathbf{Q}_{\mathrm{even}})^{-1}\mathbf{1}$ for even $k$ and $1+\mathbf{e}_1^\top(\mathbf{I}-\mathbf{Q}_{\mathrm{odd}})^{-1}\mathbf{1}$ for odd $k$.

Without loss of generality, we assume $k$ is even and compute the variance of $T_k$. Let \(Q\) denote the relevant transient matrix and let \(\alpha\) be the initial
row vector. The survival representation gives
$\displaystyle \E[T_k]
=
\sum_{n=0}^{\infty}\bar F(n)
=
\alpha(I-Q)^{-1}\mathbf 1$.
For an integer-valued positive lifetime,
$\displaystyle \E[T_k^2]
=
\sum_{n=0}^{\infty}(2n+1)\Prob(T_k>n)$.
Hence
$\displaystyle \E[T_k^2]
=
\alpha
\left[
2\sum_{n=0}^{\infty}nQ^n
+
\sum_{n=0}^{\infty}Q^n
\right]\mathbf 1$.
Using
$\displaystyle \sum_{n=0}^{\infty}Q^n=(I-Q)^{-1}$ and 
$\displaystyle \sum_{n=0}^{\infty}nQ^n=Q(I-Q)^{-2}$,
we obtain
$\displaystyle \E[T_k^2]
=
\alpha
\left[
2Q(I-Q)^{-2}
+
(I-Q)^{-1}
\right]\mathbf 1$.
Therefore
$\displaystyle \Var(T_k)
=
\E[T_k^2]-\E[T_k]^2$.

\paragraph{Conditional RUL.} 
Let the signed transient state space be
$\displaystyle \mathcal S_k=\{-(k-1),-(k-2),\ldots,0,\ldots,k-2,k-1\}$.
For \(i,j\in\mathcal S_k\), define the transient kernel
$\displaystyle (\mathbf P_{\rm sgn})_{ij}
=
p\,\mathbf 1_{\{j=i+1,\ |j|<k\}}
+
q\,\mathbf 1_{\{j=i-1,\ |j|<k\}}$.
The missing probability mass in row \(i\) is the one-step absorption probability,
$\displaystyle a_i
=
p\,\mathbf 1_{\{i=k-1\}}
+
q\,\mathbf 1_{\{i=-(k-1)\}}$.
Thus
$\displaystyle \sum_{j\in\mathcal S_k}(\mathbf P_{\rm sgn})_{ij}
=
1-a_i
\le 1$.
For the raw signed stopping time
$\displaystyle N_k^{(s)}=\inf\{n\ge 0:\ |S_n|=k,\ S_0=s\}$,
we have
\begin{equation}
\label{eq:RUL}
\Prob(N_k^{(s)}>n)
=
\mathbf e_s^\top \mathbf P_{\rm sgn}^{\,n}\mathbf 1,
\qquad
\E[N_k^{(s)}]
=
\sum_{n=0}^{\infty}\mathbf e_s^\top \mathbf P_{\rm sgn}^{\,n}\mathbf 1
=
\mathbf e_s^\top(\mathbf I-\mathbf P_{\rm sgn})^{-1}\mathbf 1.
\end{equation}
For the parameter ranges in this paper ($k\le 30$), the parity skeleton is at most $15\times 15$ and the signed matrix at most $59\times 59$; all quantities are computed deterministically to floating-point precision.

\section{Operational Bounds and Decision Rules}\label{sec:decisions}

\subsection{Geometric Persistence-Probability Envelope}\label{sec:geo}

\begin{proposition}[Geometric domination]\label{prop:geo}
Let $r_k(n)=\bar{F}_k(n{+}1)/\bar{F}_k(n)$. Since $T_k$ is IFR, $r_k(n)$ is nonincreasing, and for any anchor $n_0{<}H$, $\Prob(T_k{>}H)\le\bar{F}_k(n_0)\cdot[r_k(n_0)]^{H-n_0}$.
\end{proposition}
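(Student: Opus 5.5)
The plan is to telescope the survival function from the anchor $n_0$ up to $H$ and then bound each factor by the ratio at the anchor. The monotonicity of $r_k$ is already available: Proposition~\ref{prop:IFR} gives IFR, and Corollary~\ref{cor:main}(i) gives $r_k(n)=1-h_k(n+1)$ nonincreasing. So the only real work is the telescoping bound, plus some care with zero denominators.

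First I will settle well-definedness. For $p,q\in(0,1)$ and $k\ge 2$, the walk can stay in the transient band for any number of steps, for example by oscillating between $0$ and $\pm1$. Hence $\bar F_k(n)>0$ for every $n$, and every ratio $r_k(n)$ is defined. I will note that if $p\in\{0,1\}$ the survival function vanishes after finitely many steps. With the convention $0/0:=0$, both sides of the inequality are then trivially comparable, since the left side is $0$ once $\bar F_k$ hits zero.

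Next, for $H>n_0$ I will write
\[
\Prob(T_k>H)=\bar F_k(H)=\bar F_k(n_0)\prod_{j=n_0}^{H-1}\frac{\bar F_k(j+1)}{\bar F_k(j)}=\bar F_k(n_0)\prod_{j=n_0}^{H-1}r_k(j).
\]
This is exactly the decomposition used in the NBU argument after Corollary~\ref{cor:main}. Each index satisfies $j\ge n_0$, so monotonicity gives $0\le r_k(j)\le r_k(n_0)$. Taking the product of $H-n_0$ nonnegative factors then yields $\Prob(T_k>H)\le \bar F_k(n_0)\,[r_k(n_0)]^{H-n_0}$.

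There is no genuine obstacle. The only delicate points are degenerate cases: the trivial case $k=1$, where $T_1=1$ and the bound holds immediately, and the positivity check that makes the ratios meaningful. Once those are handled, the result is a direct consequence of the IFR property established earlier.
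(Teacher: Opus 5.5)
Your proposal is correct and matches the paper's proof: both telescope $\bar F_k(H)=\bar F_k(n_0)\prod_{j=n_0}^{H-1}r_k(j)$ and bound each factor by $r_k(n_0)$, using the fact that IFR makes $r_k$ nonincreasing. Your additional checks are sound refinements that the paper leaves implicit: that $\bar F_k(n)>0$ for $p\in(0,1)$ and $k\ge 2$, so every ratio is defined, and that the degenerate cases $k=1$ and $p\in\{0,1\}$ also satisfy the bound.
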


\begin{proof}
For \(H>n_0\),
$\displaystyle \bar F(H)
=
\bar F(n_0)
\prod_{j=n_0}^{H-1}
\frac{\bar F(j+1)}{\bar F(j)}
=
\bar F(n_0)
\prod_{j=n_0}^{H-1}r(j)$.
Because IFR implies \(r(j)\le r(n_0)\) for every \(j\ge n_0\),
$\displaystyle \bar F(H)
\le
\bar F(n_0)r(n_0)^{H-n_0}$.
If \(n_0=H-\ell\), then the adaptive local envelope becomes
$\displaystyle \bar F(H)
\le
\bar F(H-\ell)r(H-\ell)^\ell$.
The relative tightness is
$\displaystyle \frac{
\bar F(H-\ell)r(H-\ell)^\ell
}{
\bar F(H)
}
=
\prod_{j=H-\ell}^{H-1}
\frac{r(H-\ell)}{r(j)}
\ge 1$.
\end{proof}

This bounds the probability that the patient state \emph{persists without crossing} its threshold beyond horizon $H$. Anchoring $n_0$ near $H$ exploits the mature hazard and yields tight bounds. Note: this is an upper bound on \emph{survival}, not on event probability. Certifying $\Prob(T_k\le H)\le\alpha$ requires exact evaluation of $\bar{F}_k(H)$ via~\eqref{eq:surv_even} or~\eqref{eq:surv_odd}.

\subsection{Time-Based Preventive Intervention}\label{sec:repl}

Under a proactive schedule that triggers a clinical review or preventive telemedicine intervention at age $\tau$ if the system has not yet alarmed, the long-run cost rate is
\begin{equation}\label{eq:cost}
J(\tau) = \frac{C_P\bar{F}_k(\tau)+C_F(1{-}\bar{F}_k(\tau))}{\sum_{n=0}^{\tau-1}\bar{F}_k(n)},
\end{equation}
where $C_P<C_F$ are the costs (economic or clinical burden) of preventive intervention vs.\ reactive acute care. 
We give a derivation of \eqref{eq:cost}.
Under a preventive schedule at age \(\tau\), one renewal cycle ends at
$\displaystyle C_\tau=\min(T_k,\tau)$.
The expected cycle length is
$\displaystyle \E[C_\tau]
=
\sum_{n=0}^{\tau-1}\Prob(C_\tau>n)
=
\sum_{n=0}^{\tau-1}\Prob(T_k>n)
=
\sum_{n=0}^{\tau-1}\bar F_k(n)$.
The expected cost per cycle is
\[
\E[\text{cost per cycle}]
=
C_P\Prob(T_k>\tau)
+
C_F\Prob(T_k\le \tau)
=
C_P\bar F_k(\tau)+C_F[1-\bar F_k(\tau)].
\]
Therefore the long-run renewal cost rate is
$\displaystyle J(\tau)
=
\frac{
C_P\bar F_k(\tau)+C_F[1-\bar F_k(\tau)]
}{
\sum_{n=0}^{\tau-1}\bar F_k(n)
}$.

We adopt an end-of-period telemetry convention: a state that crosses during period $\tau$ incurs acute cost $C_F$; one stable at the end of period $\tau$ is preventively reviewed at cost $C_P$. Under this convention, $\bar{F}_k(\tau)$ is the probability of remaining within band through period $\tau$, and the expected cycle length is $\sum_{n=0}^{\tau-1}\bar{F}_k(n)$.

\begin{proposition}[Quasi-convex cost under IFR]\label{prop:repl}
Under IFR and $C_P<C_F$, the discrete intervention cost sequence $\{J(\tau)\}$ has the single-crossing/quasi-convex structure used in classical age-replacement theory~\cite{barlow1996mathematical}; the optimum $\tau^*$ can therefore be found by one-dimensional search over the integer ages. In the numerical implementation below, we compare $J(\tau^*)$ with the reactive-care limit $J(\infty)=C_F/\E[T_k]$ and report preventive gains only when scheduled intervention improves on this baseline.
\end{proposition}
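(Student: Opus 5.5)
We prove quasi-convexity by the classical age-replacement route: study the sign of the forward difference of \(J\) and show that it changes sign at most once, from negative to nonnegative. Write
\[
A(\tau)=C_F-(C_F-C_P)\bar F_k(\tau), \qquad B(\tau)=\sum_{n=0}^{\tau-1}\bar F_k(n),
\]
so that \(J(\tau)=A(\tau)/B(\tau)\) with \(B(\tau)>0\) for \(\tau\ge1\). We have \(A(\tau+1)-A(\tau)=(C_F-C_P)f_k(\tau+1)\), where \(f_k(n)=\bar F_k(n-1)-\bar F_k(n)\), and \(B(\tau+1)-B(\tau)=\bar F_k(\tau)\). Clearing the positive denominator \(B(\tau)B(\tau+1)\) gives
\[
\operatorname{sign}\bigl(J(\tau+1)-J(\tau)\bigr)
=\operatorname{sign}\Bigl((C_F-C_P)f_k(\tau+1)B(\tau)-\bar F_k(\tau)A(\tau)\Bigr).
\]
On the set where \(\bar F_k(\tau)>0\) we divide by \(\bar F_k(\tau)\). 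Since \(f_k(\tau+1)/\bar F_k(\tau)=h_k(\tau+1)\), the sign is that of
\[
D(\tau):=(C_F-C_P)\,h_k(\tau+1)\,B(\tau)-A(\tau).
\]

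The key step is to show that \(D\) is nondecreasing in \(\tau\). Compute
\[
D(\tau+1)-D(\tau)=(C_F-C_P)\Bigl[\bigl(h_k(\tau+2)-h_k(\tau+1)\bigr)B(\tau+1)+h_k(\tau+1)\bar F_k(\tau)-f_k(\tau+1)\Bigr].
\]
The last two terms cancel exactly, because \(h_k(\tau+1)\bar F_k(\tau)=f_k(\tau+1)\). The remaining bracket is nonnegative, since \(h_k\) is nondecreasing by Proposition~\ref{prop:IFR} and \(B>0\). Hence \(D\) is nondecreasing. Consequently, the sequence \(J(\tau+1)-J(\tau)\) is negative for \(\tau<\tau^*\) and nonnegative afterwards, where \(\tau^*=\min\{\tau\ge1: D(\tau)\ge 0\}\) (or \(\tau^*=\infty\) if no such \(\tau\) exists). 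This is exactly the single-crossing/quasi-convex structure claimed, so the first nonnegative difference locates the minimiser.

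To justify the one-dimensional search and the comparison with \(J(\infty)\), we note two facts. First, \(\bar F_k(\tau)\to0\) geometrically, because \(\mathbf Q_{\mathrm{even}}\) and \(\mathbf Q_{\mathrm{odd}}\) are strictly substochastic and irreducible. Therefore \(B(\tau)\to\E[T_k]\) by \eqref{eq:haz_mean}, and \(J(\tau)\to C_F/\E[T_k]\). Second, we can characterise when the optimum is finite. If \(\lim_\tau D(\tau)=(C_F-C_P)h_k(\infty)\E[T_k]-C_F\) is positive, then some finite \(\tau^*\) exists. Otherwise \(J\) is nonincreasing and the reactive limit is optimal, which is why preventive gains are reported only when \(J(\tau^*)<J(\infty)\). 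Here \(h_k(\infty)\) is the limit of the nondecreasing, bounded hazard; it exists by monotonicity and equals one minus the Perron root of the transient matrix.

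The main obstacle we expect is the set where \(\bar F_k(\tau)=0\), where the division above is not allowed. This set is empty here, because every state reaches the origin with positive probability and the origin has the self-loop \(p_{0,0}=2pq>0\), so the survival probabilities stay positive for all \(\tau\). A second point to handle is the claim of strict uniqueness versus a flat minimum. When \(D(\tau)=0\) at some \(\tau\), we have \(J(\tau+1)=J(\tau)\), so the minimiser may be an interval of consecutive ages. The statement should therefore be read as quasi-convexity, under which the optimal set is a single interval of consecutive integers and the one-dimensional search returns it correctly.
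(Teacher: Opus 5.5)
Your argument is correct and follows the same forward-difference route as the paper: both reduce $J(\tau+1)\le J(\tau)$ to $h_k(\tau+1)\le J(\tau)/(C_F-C_P)$ and read off $\tau^*$ as the first crossing. Your additional identity $D(\tau+1)-D(\tau)=(C_F-C_P)\bigl(h_k(\tau+2)-h_k(\tau+1)\bigr)B(\tau+1)\ge 0$ is the step that actually uses IFR to make the crossing single, which the paper's proof asserts without verifying.
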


Algorithm~\ref{alg:replacement} summarises the procedure in Proposition~\ref{prop:repl}.

\begin{proof}
Let
\[
A_\tau=C_P\bar F(\tau)+C_F[1-\bar F(\tau)],
\qquad
B_\tau=\sum_{n=0}^{\tau-1}\bar F(n),
\qquad
J(\tau)=\frac{A_\tau}{B_\tau}.
\]
Then
$\displaystyle A_{\tau+1}-A_\tau
=
(C_P-C_F)[\bar F(\tau+1)-\bar F(\tau)]
=
(C_F-C_P)f(\tau+1)$,
where \(f(\tau+1)=\bar F(\tau)-\bar F(\tau+1)\). Also,
$\displaystyle B_{\tau+1}-B_\tau=\bar F(\tau)$.
Thus
$J(\tau+1)\le J(\tau)$
is equivalent to
$\displaystyle \frac{A_{\tau+1}}{B_{\tau+1}}
\le
\frac{A_\tau}{B_\tau}$,
or
$\displaystyle [A_\tau+(C_F-C_P)f(\tau+1)]B_\tau
\le
A_\tau[B_\tau+\bar F(\tau)]$.
After cancellation,
$\displaystyle (C_F-C_P)f(\tau+1)B_\tau
\le
A_\tau\bar F(\tau)$.
Equivalently,
$\displaystyle \frac{f(\tau+1)}{\bar F(\tau)}
\le
\frac{A_\tau}{(C_F-C_P)B_\tau}$.
Since
$\displaystyle \frac{f(\tau+1)}{\bar F(\tau)}
=
h(\tau+1)$,
the decrease condition becomes
$\displaystyle h(\tau+1)
\le
\frac{J(\tau)}{C_F-C_P}$.
Thus the minimizer is characterised by the first age at which the IFR hazard
crosses the cost-normalised current average cost:
$\displaystyle \tau^*
=
\inf\left\{
\tau:\ 
h(\tau+1)>
\frac{J(\tau)}{C_F-C_P}
\right\}$.
\end{proof}

\begin{algorithm2e}[htbp]
\small
\DontPrintSemicolon
\SetKwInOut{Input}{Input}\SetKwInOut{Output}{Output}
\caption{Time-Based Intervention Optimisation}\label{alg:replacement}
\Input{Exact $\bar{F}_k(n)$ from~\eqref{eq:surv_even} or~\eqref{eq:surv_odd}; costs $C_P,C_F$; search limit $\tau_{\max}$.}
\Output{Optimal $\tau^*$, cost rate $J^*$.}
\For{$\tau=1,\dots,\tau_{\max}$}{
  $\displaystyle \E[\min(T_k,\tau)]\leftarrow\sum_{n=0}^{\tau-1}\bar{F}_k(n)$\;
  $\displaystyle J(\tau)\leftarrow\frac{C_P\bar{F}_k(\tau)+C_F(1-\bar{F}_k(\tau))}{\E[\min(T_k,\tau)]}$\;
}
$\tau^*\leftarrow\arg\min_\tau J(\tau)$;\; $J^*\leftarrow J(\tau^*)$\;
\Return $(\tau^*,J^*)$\;
\end{algorithm2e}

\subsection{Pipeline Summary}

The complete workflow is: (1)~estimate $\hat\mu,\hat\sigma$ from wearable sensor data; (2)~calibrate $(p,\delta,k)$ via~\eqref{eq:cal}; (3)~build $\mathbf{Q}_{\mathrm{even}}$ or $\mathbf{Q}_{\mathrm{odd}}$ from~\eqref{eq:even_prob} or~\eqref{eq:odd_prob} respectively; (4)~compute exact $\bar{F}_k$, $h_k$, $J(\tau)$, $\tau^*$ from~\eqref{eq:surv_even}/\eqref{eq:surv_odd}--\eqref{eq:cost}; (5)~for condition-based RUL from observed signed state $s$, build $\mathbf{P}_{\mathrm{sgn}}$ and apply~\eqref{eq:RUL}.

\section{Numerical Validation}\label{sec:numerical}

Design quantities ($\bar{F}_k$, $h_k$, $\E[T_k]$, $J(\tau)$, $\tau^*$) are computed exactly from the skeleton matrix. Simulation ($R{=}10^5$) is used only for implementation diagnostics and stress tests.

\subsection{IFR/NBU Implementation Diagnostics}

Table~\ref{tab:ifr} reports Monte Carlo diagnostic agreement with the theoretical IFR/NBU properties. Deviations from~1 arise from finite-sample noise in empirical hazard and survival estimates (rolling-window smoothing of $\hat{h}(n)$ and discrete bucketing of survival), not from violations of the exact matrix-computed distribution. Fig.~\ref{fig:nbu} visualises the NBU surface from exact matrix computation: all values are nonpositive.

\begin{table}[htbp]
\centering
\caption{IFR/NBU implementation diagnostics. ``Hazard mono.\ frac.'' is the fraction of consecutive smoothed empirical-hazard comparisons satisfying $\hat h(n{+}1)\ge\hat h(n)$ in $R{=}20{,}000$ replicates; ``NBU ineq.\ frac.'' is the fraction of $n{=}2000$ random pairs $(s,t)$ satisfying the empirical NBU inequality. Both quantities are bounded away from 1 by finite-sample noise; $\E[T_k]$ is exact (matrix-computed).}
\label{tab:ifr}
\small
\begin{tabular}{ccrrr}
\toprule
$p$ & $k$ & $\E[T_k]$ & Hazard mono.\ frac. & NBU ineq.\ frac. \\
\midrule
0.50 & 10 & 50.0 & 0.97 & 0.95 \\
0.50 & 20 & 200.0 & 0.96 & 0.93 \\
0.55 & 10 & 38.1 & 0.97 & 0.93 \\
0.55 & 20 & 96.4 & 0.96 & 0.90 \\
0.60 & 10 & 24.1 & 0.97 & 0.91 \\
0.60 & 20 & 50.0 & 0.98 & 0.84 \\
\bottomrule
\end{tabular}
\end{table}

\begin{figure}[htbp]
\centering
\includegraphics[width=0.5\linewidth]{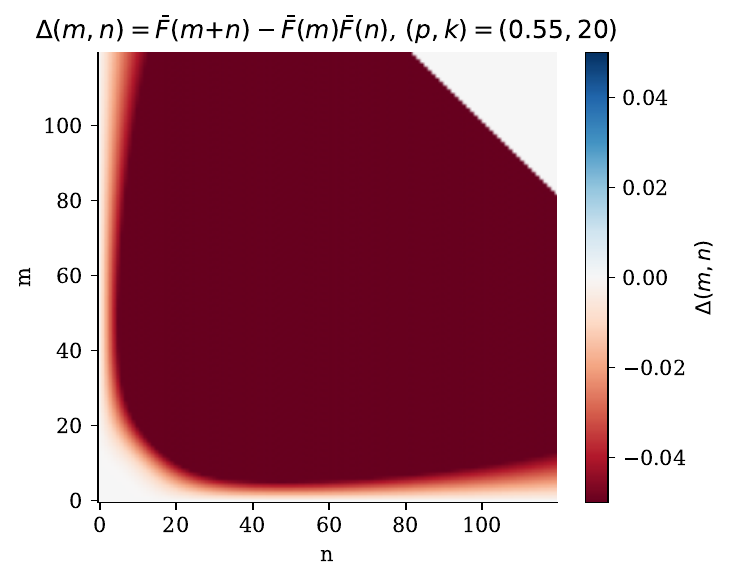}
\caption{NBU verification: $\Delta(m,n)=\bar{F}(m{+}n)-\bar{F}(m)\bar{F}(n)$ for $(p,k){=}(0.55,20)$, exact matrix computation. All values $\le 0$.}
\label{fig:nbu}
\end{figure}

\subsection{Preventive-Intervention Sensitivity Analysis}

Table~\ref{tab:sensitivity} shows exact optimal intervention ages and efficiency gains across parameter configurations. Gains range from near zero (low $C_F/C_P$, approaching reactive-care optimality) to over 50\% (high cost asymmetry). Fig.~\ref{fig:cost_tail}a shows the characteristic quasi-convex cost curve.

\begin{table}[htbp]
\centering
\caption{Preventive-intervention optimisation (exact matrix computation). $\dagger$~indicates reactive care (wait for excursion) is optimal within the searched horizon ($\tau_{\max}=500$); for $\dagger$ rows the ``Best cost'' column reports $J(\infty)$ rather than a finite $J(\tau^*)$.}
\label{tab:sensitivity}
\small
\begin{tabular}{cccccr}
\toprule
$p$ & $k$ & $C_F/C_P$ & $\tau^*$ & Best cost & $\mathcal{G}$ (\%) \\
\midrule
0.50 & 10 & 3 & $\dagger$ & 0.060 & 0.0 \\
0.50 & 10 & 5 & $\dagger$ & 0.100 & 0.0 \\
0.50 & 10 & 10 & 10 & 0.149 & 25.4 \\
0.55 & 20 & 3 & 79 & 0.031 & 1.1 \\
0.55 & 20 & 5 & 37 & 0.040 & 22.2 \\
0.55 & 20 & 10 & 26 & 0.050 & 51.6 \\
0.60 & 10 & 3 & 21 & 0.123 & 0.6 \\
0.60 & 10 & 5 & 9 & 0.162 & 21.8 \\
0.60 & 20 & 5 & 22 & 0.058 & 42.4 \\
\bottomrule
\end{tabular}
\end{table}

\subsection{Parametric Benchmarks: IG and Weibull}

For an inverse-Gaussian random variable \(G\sim IG(m,\lambda)\),
$\displaystyle \E[G]=m$ and
$\displaystyle \Var(G)=\frac{m^3}{\lambda}$.
Matching the first two moments of \(T_k\) gives
$\displaystyle m=\E[T_k]$ and
$\displaystyle \lambda=\frac{m^3}{\Var(T_k)}$.
The benchmark survival is therefore
$\bar F_{\rm IG}(n)
=
1-F_{\rm IG}(n;m,\lambda)$,
and its induced cost rate is
$\displaystyle J_{\rm IG}(\tau)
=
\frac{
C_P\bar F_{\rm IG}(\tau)+C_F[1-\bar F_{\rm IG}(\tau)]
}{
\sum_{n=0}^{\tau-1}\bar F_{\rm IG}(n)
}$.
The benchmark policy is
$\displaystyle \tau^*_{\rm IG}=\arg\min_\tau J_{\rm IG}(\tau)$,
but its realised regret under the true discrete model is
$\displaystyle {\rm Regret}_{\rm IG}
=
\frac{
J_{T_k}(\tau^*_{\rm IG})-J_{T_k}(\tau^*_{T_k})
}{
J_{T_k}(\tau^*_{T_k})
}\times 100\%$.

Let \(W\sim{\rm Weibull}(\alpha,\beta)\), with shape \(\alpha>0\) and scale
\(\beta>0\). Then
$\displaystyle \E[W]=\beta\Gamma\left(1+\frac1\alpha\right)$,
and
$\displaystyle \Var(W)
=
\beta^2
\left[
\Gamma\left(1+\frac2\alpha\right)
-
\Gamma\left(1+\frac1\alpha\right)^2
\right]$.
Let \(m=\E[T_k]\) and \(v=\Var(T_k)\). Eliminating \(\beta\) gives
$\displaystyle \frac{v}{m^2}
=
\frac{
\Gamma(1+2/\alpha)
}{
\Gamma(1+1/\alpha)^2
}
-1$.
We solve this scalar equation numerically for \(\alpha\), then set
$\displaystyle \beta
=
\frac{m}{\Gamma(1+1/\alpha)}$.
The Weibull survival is
$\displaystyle \bar F_{\rm W}(n)
=
\exp\left[-\left(\frac{n}{\beta}\right)^\alpha\right]$.

We compare $T_k$-optimal preventive intervention against two moment-matched parametric benchmarks: the inverse-Gaussian (IG) distribution from Brownian first-passage, and the Weibull distribution---a workhorse of survival analysis and reliability engineering~\cite{murthy2004weibull}. For each $(p,k)$, the benchmark mean and variance are matched to $\E[T_k]$ and $\Var(T_k)$ computed exactly from~\eqref{eq:haz_mean}; the resulting benchmark-optimal $\tau^*$ is then evaluated under the true discrete $T_k$ survival. Table~\ref{tab:benchmark} shows that Weibull and IG benchmarks each lead to suboptimal policies, with regret up to 11.4\%. Neither dominates universally: Weibull is closer to optimal for symmetric ($p=0.5$) walks, where reactive care is itself near-optimal, while IG matches better for biased walks where preventive intervention is meaningfully early. The discrete IFR/NBU structure of $T_k$ is not exactly captured by either continuous-time benchmark.

\begin{table}[htbp]
\centering
\caption{Intervention-age regret for moment-matched IG and Weibull benchmarks under true $T_k$ dynamics ($C_F/C_P{=}5$). Entries marked $\dagger$ indicate that the optimal $\tau^*_{T_k}$ exceeds the search horizon, i.e.\ reactive care is near-optimal for $T_k$.}
\label{tab:benchmark}
\small
\begin{tabular}{ccrrrrr}
\toprule
$p$ & $k$ & $\tau^*_{T_k}$ & $\tau^*_{\mathrm{IG}}$ & $\tau^*_{\mathrm{Weib}}$ & IG reg.\ (\%) & Weib reg.\ (\%) \\
\midrule
0.50 & 10 & $\dagger$ & 19 & 70 & 4.8 & 0.5 \\
0.50 & 20 & $\dagger$ & 74 & 284 & 4.1 & 0.3 \\
0.55 & 10 & 16 & 14 & 39 & 0.3 & 3.2 \\
0.55 & 20 & 37 & 38 & 69 & 0.0 & 11.2 \\
0.60 & 10 & 9 & 10 & 17 & 0.5 & 11.4 \\
0.60 & 20 & 22 & 23 & 28 & 0.2 & 6.2 \\
\bottomrule
\end{tabular}
\end{table}

\begin{figure}[htbp]
\centering
\includegraphics[width=0.5\linewidth]{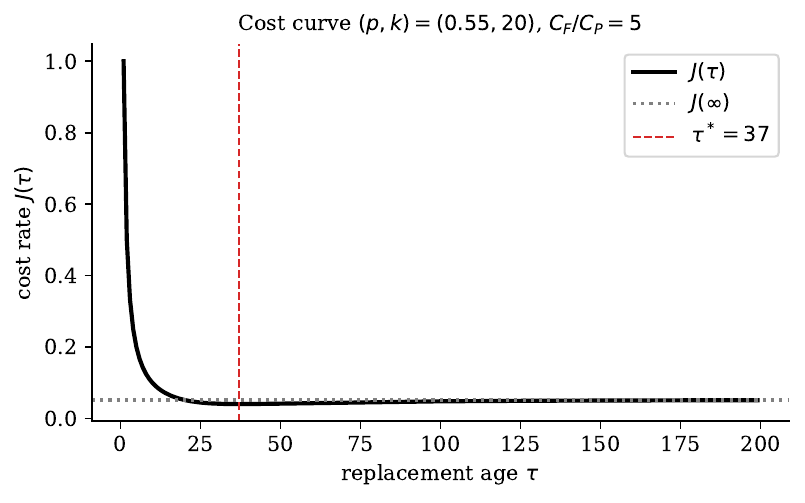}\\[1pt]
{\small (a) Cost curve $J(\tau)$: $(p,k){=}(0.55,20)$, $C_F/C_P{=}5$}\\[5pt]
\includegraphics[width=0.5\linewidth]{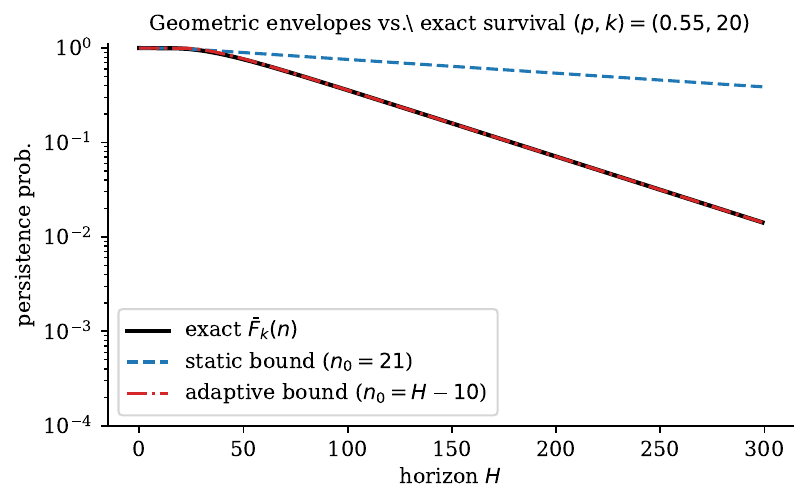}\\[1pt]
{\small (b) Geometric envelopes vs.\ exact survival}
\caption{Key design tools from the IFR structure.}
\label{fig:cost_tail}
\end{figure}

\subsection{Geometric Bounds: Static vs.\ Adaptive}

Table~\ref{tab:tail} shows exact persistence probabilities and geometric bounds for $(p,k){=}(0.55,20)$. The static bound ($n_0{=}21$) provides a worst-case envelope for conservative design. The adaptive bound ($n_0{=}H{-}10$) tracks the exact value to within 1--3\%, because IFR hazards change slowly locally. Fig.~\ref{fig:cost_tail}b shows both envelopes on a semi-logarithmic scale.

\begin{table}[htbp]
\centering
\caption{Persistence bounds: exact vs.\ geometric envelopes.}
\label{tab:tail}
\small
\begin{tabular}{rccc}
\toprule
$H$ & $\bar{F}_k(H)$ & Static & Adaptive \\
\midrule
50 & 0.761 & 0.896 & 0.769 \\
100 & 0.358 & 0.758 & 0.358 \\
150 & 0.160 & 0.641 & 0.160 \\
200 & 0.071 & 0.542 & 0.071 \\
250 & 0.032 & 0.458 & 0.032 \\
\bottomrule
\end{tabular}
\end{table}

\subsection{Robustness to Serial Correlation}

The $T_k$ model assumes i.i.d.\ $\{\pm 1\}$ increments. To stress-test against this assumption in a non-i.i.d.\ physiological setting, we leave the exact $\{\pm 1\}$ walk class and simulate a continuous accumulated process $X_t=\sum_{s\le t}Y_s$ with AR(1) increments $Y_n = (1{-}\phi)\mu + \phi Y_{n-1}+\epsilon_n$, $\phi\in[0,0.8]$. The per-period mean is set to $\mu=\delta(2p-1)$ matching $p{=}0.55$ in the i.i.d.\ base case ($\delta{=}1$ here), and the innovation variance $\Var(\epsilon_n)$ is chosen so that the marginal variance $\Var(Y_n)$ equals the i.i.d.\ value $4pq$ at each $\phi$.
For the AR(1) increment process
$\displaystyle Y_n=(1-\phi)\mu+\phi Y_{n-1}+\epsilon_n$, $\epsilon_n\sim(0,\sigma_\epsilon^2)$,
stationarity gives
$\E[Y_n]=\mu$.
Moreover,
$\displaystyle \Var(Y_n)
=
\phi^2\Var(Y_{n-1})+\sigma_\epsilon^2$.
At stationarity,
$\displaystyle \Var(Y_n)=\frac{\sigma_\epsilon^2}{1-\phi^2}$.
To match the i.i.d. random-walk increment variance
$\displaystyle \Var(Z)=4pq\delta^2$,
we set
$\displaystyle \sigma_\epsilon^2
=
(1-\phi^2)4pq\delta^2$.
For \(\delta=1\), this reduces to
$\displaystyle \sigma_\epsilon^2=(1-\phi^2)4pq$.

Failure occurs when $|X_t|$ exits $[-L,L]$ with $L=k\delta=20$. This experiment is therefore a robustness check against model misspecification, not an exact $T_k$ calculation. Because the simulation operates on the continuous accumulated process rather than the parity-corrected $T_k$ skeleton, the policy ages $\tau^*$ in Table~\ref{tab:ar1} are reported \emph{in raw periods} (corresponding to $\approx 2\tau^*_{T_k}$ for the equivalent $T_k$ problem). With $R{=}10^5$ replicates, Table~\ref{tab:ar1} shows that the i.i.d.-optimal policy incurs $<$5\% regret for $\phi\le 0.5$; beyond $\phi\approx 0.6$, persistent increments generate momentum runs that violate the drift--diffusion logic, and regret escalates. The practical diagnostic: estimate $\hat\phi$ from the increment series; apply $T_k$ when $\hat\phi<0.5$; for higher autocorrelation, consider state-space models.

\begin{table}[htbp]
\centering
\caption{AR(1) robustness in raw walk periods ($p{=}0.55$, $k{=}20$, $C_F/C_P{=}5$, $R{=}10^5$). Policy ages on the parity-corrected $T_k$ scale are approximately $\tau^*/2$.}
\label{tab:ar1}
\small
\begin{tabular}{ccccr}
\toprule
$\phi$ & $\tau^*_{\mathrm{iid}}$ (raw) & $\tau^*_{\mathrm{AR}}$ (raw) & Regret (\%) & [95\% CI] \\
\midrule
0.0 & 70 & 70 & 0.0 & -- \\
0.2 & 70 & 65 & 0.9 & [0.6, 1.2] \\
0.4 & 70 & 54 & 2.7 & [2.2, 3.2] \\
0.5 & 70 & 46 & 4.1 & [3.5, 4.7] \\
0.6 & 70 & 38 & 8.5 & [7.6, 9.4] \\
0.8 & 70 & 19 & 37.4 & [35.1, 39.7] \\
\bottomrule
\end{tabular}
\end{table}

\subsection{Calibration Robustness}

In practice, $(\hat\mu,\hat\sigma)$ are estimated from finite samples and may deviate from the true generating parameters. We assess sensitivity of the optimal policy to such mis-calibration by perturbing $\hat\mu$ and $\hat\sigma$ around true values $(\mu,\sigma)=(0.05,0.20)$ with $L=4.0$ (giving $\delta=0.2062$, $p=0.621$, and raw $k=\lfloor L/\delta\rfloor=19$, which we round down to the even value $k=18$ for the parity-skeleton computation). For each perturbation we derive the mis-calibrated $(p_{\mathrm{est}},k_{\mathrm{est}})$ via~\eqref{eq:cal} (the raw $k_{\mathrm{raw}}$ rounded to the nearest even value $\ge 4$), compute the corresponding optimal $\tau^*_{\mathrm{est}}$, and evaluate its long-run cost under the true survival. Table~\ref{tab:calib_robust} verifies an analytic property of the calibration: proportional scaling of $\hat\mu$ and $\hat\sigma$ leaves $p$ invariant at $0.621$ (since $p$ depends only on the ratio $\hat\mu/\delta$), and shifts only $k$ through the floor operation. Pure drift or volatility perturbations move both $p$ and $k$. Penalty under true dynamics remains below 5\% for all $\pm 10\%$ rows; the worst-case row $(-20\%,-20\%)$ underestimates volatility, inflates $k_{\mathrm{est}}$ to 24, and incurs a 15\% penalty. In rolling-window deployment we therefore recommend cross-checking $(\hat\mu_t,\hat\sigma_t)$ against an independent volatility estimator.

\begin{table}[htbp]
\centering
\caption{Calibration robustness: cost penalty of policies derived from perturbed $(\hat\mu,\hat\sigma)$ evaluated under true dynamics ($\mu{=}0.05$, $\sigma{=}0.20$, $L{=}4.0$, $C_F/C_P{=}5$).}
\label{tab:calib_robust}
\small
\begin{tabular}{rrccrr}
\toprule
$\Delta\hat\mu$ & $\Delta\hat\sigma$ & $p_{\mathrm{est}}$ & $k_{\mathrm{est}}$ & $\tau^*_{\mathrm{est}}$ & Penalty (\%) \\
\midrule
$-20\%$ & $-20\%$ & 0.621 & 24 & 24 & 15.0 \\
$-10\%$ & $-10\%$ & 0.621 & 20 & 19 & 2.1 \\
$-10\%$ & $0\%$   & 0.610 & 18 & 18 & 0.7 \\
$0\%$   & $-10\%$ & 0.634 & 20 & 18 & 0.7 \\
$0\%$   & $0\%$   & 0.621 & 18 & 17 & 0.0 \\
$+10\%$ & $0\%$   & 0.633 & 18 & 16 & 0.2 \\
$0\%$   & $+10\%$ & 0.611 & 16 & 15 & 1.5 \\
$+10\%$ & $+10\%$ & 0.621 & 16 & 14 & 4.2 \\
$+20\%$ & $+20\%$ & 0.621 & 16 & 14 & 4.2 \\
$-20\%$ & $+20\%$ & 0.582 & 16 & 19 & 2.1 \\
$+20\%$ & $-20\%$ & 0.676 & 22 & 16 & 0.2 \\
\bottomrule
\end{tabular}
\end{table}

\subsection{Effect of Band Size $k$}

Table~\ref{tab:band} reports how exact lifetime moments and optimal replacement age scale with the band size $k$ (fixed $p{=}0.55$, $C_F/C_P{=}5$). Over the tested range $k\in[10,30]$, $\E[T_k]$ grows approximately linearly with $k$ at $p=0.55$; the asymptotic scaling for two-sided biased ruin depends on the bias and boundary normalisation and is not generally linear. The ratio $\tau^*/\E[T_k]$ remains in a narrow band around $0.40$---preventive intervention is consistently scheduled at approximately 40\% of expected lifetime across this range of $k$. The efficiency gain $\mathcal{G}$ grows monotonically with $k$, reaching 34.4\% at $k=30$: larger safety buffers benefit more from preventive intervention because the IFR hazard accelerates more strongly in the upper tail.

\begin{table}[htbp]
\centering
\caption{Band-size scaling for exact $T_k$ properties ($p{=}0.55$, $C_F/C_P{=}5$).}
\label{tab:band}
\small
\begin{tabular}{rrrrrrr}
\toprule
$k$ & $\E[T_k]$ & $\tau^*$ & $J(\tau^*)$ & $J(\infty)$ & $\mathcal{G}$ (\%) & $\tau^*/\E[T_k]$ \\
\midrule
10 & 38.1 & 16 & 0.125 & 0.131 & 4.7 & 0.42 \\
14 & 62.0 & 24 & 0.071 & 0.081 & 12.1 & 0.39 \\
18 & 85.3 & 33 & 0.048 & 0.059 & 19.1 & 0.39 \\
20 & 96.4 & 37 & 0.040 & 0.052 & 22.2 & 0.38 \\
24 & 118.1 & 47 & 0.031 & 0.042 & 27.8 & 0.40 \\
26 & 128.6 & 52 & 0.027 & 0.039 & 30.2 & 0.40 \\
30 & 149.3 & 62 & 0.022 & 0.034 & 34.4 & 0.42 \\
\bottomrule
\end{tabular}
\end{table}

\subsection{Rolling-Window Detection Latency}

In the rolling-window calibration (Section~III.A), the window length $W$ controls the bias--variance trade-off in detecting non-stationarity. We simulate the digital-health scenario with a step change in PDI volatility ($\sigma_{\mathrm{dev}}: 0.15\to 0.25$/day) at day~150, and report the latency to detect $\hat\sigma_t > 1.3\,\sigma_{\mathrm{pre}}$ over $n=200$ Monte Carlo trials (Table~\ref{tab:rolling}). All values are in days. Shorter windows detect shifts faster but with higher variance: $W{=}20$ gives 9-day median latency, while $W{=}120$ gives 47 days. Detection is 100\% reliable at all tested $W$. The choice of $W$ should reflect the clinical tolerance for false alarms versus detection latency.

\begin{table}[htbp]
\centering
\caption{Rolling-window detection latency for a $\sigma$ shift at day~150 ($n{=}200$ Monte Carlo trials).}
\label{tab:rolling}
\small
\begin{tabular}{rrrrr}
\toprule
$W$ (days) & Mean lat. & Median & 90th pct & Det.\ rate \\
\midrule
20 & 10.2 & 9.0 & 18.0 & 100\% \\
30 & 14.3 & 13.0 & 26.0 & 100\% \\
60 & 25.5 & 24.0 & 40.1 & 100\% \\
90 & 36.3 & 36.0 & 54.1 & 100\% \\
120 & 48.2 & 47.0 & 70.0 & 100\% \\
\bottomrule
\end{tabular}
\end{table}

\subsection{Reproducibility}\label{sec:reproducibility}

To support auditable replication of the numerical results, we specify the following implementation details.

\emph{Skeleton matrix construction.} For even $k=2r$, the transient state space is $\{0,\dots,r-1\}$ with $\mathbf{Q}_{\mathrm{even}}\in\mathbb{R}^{r\times r}$ assembled from~\eqref{eq:even_prob}. For odd $k=2r{+}1$, the transient state space is $\{1,\dots,r\}$ with $\mathbf{Q}_{\mathrm{odd}}\in\mathbb{R}^{r\times r}$ assembled from~\eqref{eq:odd_prob}; the survival formula uses the index shift~\eqref{eq:surv_odd}. In both cases the absorbing column is dropped from the row-stochastic transition matrix to obtain the sub-stochastic kernel.

\emph{Truncation horizons.} Survival sums in~\eqref{eq:cost} and persistence-probability tables use truncation at $\tau_{\max}=500$ for $k\le 20$ and $\tau_{\max}=800$ for $k=26$ (digital-health case); the truncation error is below $10^{-6}$ in all reported quantities. The intervention-age search in Algorithm~\ref{alg:replacement} uses the same $\tau_{\max}$.

\emph{Inverse-Gaussian benchmark.} The IG distribution is calibrated by matching $m = \E[T_k]$ and shape parameter $\lambda = m^3/\Var(T_k)$, with $\E[T_k]$ and $\Var(T_k)$ exact from~\eqref{eq:haz_mean}. In SciPy notation we use {\tt scipy.stats.invgauss(mu=m/lambda, scale=lambda)}, which gives mean $m$ and variance $m^3/\lambda$. IG survival is evaluated at integer ages via the resulting {\tt cdf}. The IG-optimal $\tau^*_{\mathrm{IG}}$ minimises the cost rate~\eqref{eq:cost} using the IG survival; its performance under the true discrete dynamics is then evaluated by substituting the exact $T_k$ survival at $\tau^*_{\mathrm{IG}}$.

\emph{Simulation diagnostics.} AR(1) experiments use $R=10^5$ replications, fixed random seed (42), and the bootstrap method (200 resamples) for 95\% confidence intervals on regret. The IFR fraction is the proportion of consecutive smoothed-hazard pairs $(\hat h(n),\hat h(n+1))$ satisfying $\hat h(n+1)\ge \hat h(n)$ (rolling window 20, ages with risk-set size $<100$ excluded). The NBU fraction is the proportion of 5000 random $(s,t)$ pairs sampled from the simulated lifetimes satisfying $\hat{\bar F}(s+t)\le \hat{\bar F}(s)\hat{\bar F}(t)$.

\emph{Cost-gain baseline.} The efficiency gain $\mathcal{G}=(J(\infty)-J(\tau^*))/J(\infty)$ is relative to the reactive-care cost rate $J(\infty)=C_F/\E[T_k]$, with $C_P$ and $C_F$ as specified per table.

\section{Case Study: Digital Health Biometric-Deviation Monitoring}\label{sec:case}

\paragraph{Scope of this case study.} The numerical experiments below use a single synthetic data-generating process; they illustrate \emph{how} the framework behaves under the assumed dynamics and the \emph{shape} of the resulting decision-support outputs. They do \emph{not} constitute clinical validation. We make no diagnostic claims; the framework is positioned as decision-support and scheduling for two-sided threshold-crossing monitoring, not as a regulated medical device. Validation on real biosignal data is a separate study, and we identify candidate signals in Section~\ref{sec:case_signals}.

\subsection{Setup and Calibration}

We consider a synthetic remote patient monitoring (RPM) scenario in which the state variable is a Physiological Deviation Index (PDI) computed from wearable sensor data. The PDI is z-scored against a personalised baseline so that the value zero represents the patient's stable operating point. Excursions on the high side indicate acute systemic stress or inflammation; excursions on the low side indicate metabolic depression or sensor detachment. Both directions require clinical decision-making, motivating the two-sided absorbing-barrier model.

We generate $N{=}200$ simulated patient trajectories with daily PDI increments $\Delta X_t = \mu_{\mathrm{dev}} + \sigma_{\mathrm{dev}}\varepsilon_t$ ($\varepsilon_t\sim\mathcal{N}(0,1)$), with $\mu_{\mathrm{dev}}{=}0.02$/day, $\sigma_{\mathrm{dev}}{=}0.15$/day, and clinical safe-band half-threshold $L{=}4.0$ (in PDI units). Substituting into~\eqref{eq:cal} gives $\delta{=}0.1513$, $p{=}0.566$, $k{=}26$. In a real RPM deployment these would be replaced by personalised rolling estimates $(\hat\mu_t,\hat\sigma_t)$ from the streaming sensor data; we illustrate this in Section~\ref{sec:case_rolling}.

\paragraph{Candidate real-world signals.}\label{sec:case_signals} The PDI is intentionally generic; the two-sided control-band model is most natural for biosignals where excursions in either direction carry clinical meaning. Concrete examples include: continuous glucose deviation from a personalised target range (hypo- vs.\ hyper-glycemia); resting heart-rate or heart-rate-variability deviation from baseline (autonomic destabilisation in either direction); pulse-oximetry oxygen-saturation deviation; ambulatory blood-pressure deviation (hypo- vs.\ hypertensive excursions); and activity- or sleep-derived composite stress indices. In each case, the model would be calibrated patient-by-patient from a rolling window of the relevant signal, with the threshold $L$ chosen from clinical guidelines. Prospective validation on any of these signals is outside the scope of this paper.

At each telemetry epoch \(t\), define three interpretable decision variables:
\[
\pi_t(H)=\Prob(N_k^{(S_t)}\le H),
\qquad
\rho_t=\frac{1}{\E[N_k^{(S_t)}]},
\qquad
\eta_t=\frac{\tau_t^*}{\E[N_k]}.
\]
Here \(\pi_t(H)\) is the finite-horizon escalation probability,
\(\rho_t\) is the instantaneous RUL intensity, and \(\eta_t\) is the
normalised preventive-review age. An explainable triage rule is
$\displaystyle d_t=
\begin{cases}
0, & \pi_t(H)<\theta_1,\\
1, & \theta_1\le \pi_t(H)<\theta_2,\\
2, & \pi_t(H)\ge \theta_2,
\end{cases}$
where \(d_t=0,1,2\) denote routine monitoring, telemedicine review, and urgent
clinical escalation.

\subsection{Structural Verification}

The exact hazard $h(n)$ (matrix-computed) increases monotonically, confirmed qualitatively by simulation (Fig.~\ref{fig:case_pdi}a). This is the IFR property in action: the longer a patient remains under monitoring without a baseline reset, the higher the conditional probability of an acute physiological excursion in the next telemetry period. The exact NBU surface $\Delta(m,n)=\bar{F}(m{+}n)-\bar{F}(m)\bar{F}(n)$ is nonpositive everywhere.

\subsection{Persistence-Probability Bounds}

Table~\ref{tab:case_pdi} compares exact persistence probabilities with the adaptive geometric bound and the IG bound, all reported at calendar-day horizons $H$ (with $H/2$ in $T_k$ periods, even $H$). The adaptive $T_k$ bound is tight (within 1\%), and the IG bound is also close at these tested horizons. Under the synthetic generative assumptions, the model produces a tight upper bound on the probability that a simulated patient will not cross threshold within the next $H$ days---directly auditable from the calibrated $(p,k)$, without Monte Carlo simulation. In a deployed system, the corresponding clinical claim would require prospective validation against measured outcomes.

\begin{table}[htbp]
\centering
\caption{Digital-health case: exact persistence probability, adaptive geometric bound, and IG bound (calendar days).}
\label{tab:case_pdi}
\small
\begin{tabular}{rccc}
\toprule
$H$ (days) & $\Prob(N_k>H)$ & Adaptive & IG \\
\midrule
50 & 0.996 & 0.998 & 0.996 \\
100 & 0.856 & 0.859 & 0.861 \\
150 & 0.601 & 0.602 & 0.607 \\
200 & 0.382 & 0.382 & 0.386 \\
250 & 0.233 & 0.233 & 0.235 \\
300 & 0.139 & 0.139 & 0.140 \\
\bottomrule
\end{tabular}
\end{table}

\subsection{Optimal Preventive Intervention}

With proactive telemedicine cost $C_P{=}500$ and reactive acute-care cost $C_F{=}5000$ ($C_F/C_P{=}10$): we report all costs and ages on a calendar-day basis. One calendar day corresponds to one raw walk step, so the parity-corrected lifetime $T_k=N_k/2$ converts to days as $N_k=2T_k$. The optimisation is performed in $T_k$-periods using~\eqref{eq:cost}, and the resulting $\tau^*_{T_k}$ and cost rate are converted to calendar-day units by $\tau^*_{\mathrm{days}}=2\tau^*_{T_k}$ and $J_{\mathrm{day}}=J_{T_k}/2$, since each $T_k$-period spans two raw walk steps. From exact matrix computation, $\E[N_k]=2\E[T_k]\approx 196.3$~days, giving a reactive-care (wait-for-acute-episode) cost rate $J(\infty)=C_F/\E[N_k]\approx 25.5$/day. The exact intelligent-decision optimum schedules a proactive review at $\tau^*_{T_k}=33$ in $T_k$-periods (i.e.\ 66~calendar days), achieving $J(\tau^*)\approx 9.3$/day---an efficiency / risk reduction $\mathcal{G}\approx 63.5\%$. For comparison, the moment-matched IG-optimal policy yields the same intervention age in this configuration, so the IG cost penalty is negligible; this differs from the larger penalties observed for the higher-drift configurations of Table~\ref{tab:benchmark} (Fig.~\ref{fig:case_pdi}b).

\subsection{Conditional RUL (Patient Trajectory Prediction)}\label{sec:case_rul}

Using the signed-chain machinery of Section~\ref{sec:matrix}, the model produces an exact predictive distribution conditional on the simulated signed state. Consider three snapshots at $t=80$~simulated days:
\begin{itemize}
\item \emph{Stable baseline} ($S_{80}{=}0$): conditional median time-to-alarm $\approx 172$~days; 90-day excursion probability $\approx 0.10$.
\item \emph{Drift-favoured side} ($S_{80}{=}+8$, nearing acute stress): median time-to-alarm drops to $\approx 114$~days; 90-day excursion risk rises to $\approx 0.36$.
\item \emph{Drift-opposing side} ($S_{80}{=}-8$): median time-to-alarm extends to $\approx 230$~days.
\end{itemize}
All three numbers are computed exactly from the same $\mathbf{P}_{\mathrm{sgn}}\in\mathbb{R}^{51\times 51}$ under the synthetic dynamics. The mechanism supports rolling updates as new sensor data arrive; whether such updates suffice for clinical triage in a deployed system is an empirical question outside the scope of this paper.

\subsection{Adaptive Calibration under Physiological Non-Stationarity}\label{sec:case_rolling}

A biological regime shift at day~150 (PDI volatility increasing from $\sigma_{\mathrm{dev}}{=}0.15$ to $0.25$/day, mimicking the onset of acute inflammation) is detected within approximately 20~days under the specified rolling-window calibration ($W{=}60$), adjusting $(p_t,k_t)$ from $(0.566,26)$ to $(0.540,14)$. The effective buffer $k_t$ drops by roughly $46\%$ even though the cumulative PDI level has not changed markedly: the patient's static threshold $L$ is unchanged, but the volatility-driven safety margin has shrunk. This separation of systemic drift from regime change illustrates how the calibration scheme supports adaptive monitoring; detection latency in a real digital-health deployment would depend on the noise characteristics of the sensor stream, which are not fully captured by the synthetic scenario.

\subsection{Cross-Phenotype Personalisation}\label{sec:case_phenotype}

A core promise of digital healthcare systems engineering is \emph{personalisation}: the same model machinery should yield different intervention schedules for different patient phenotypes. Table~\ref{tab:phenotype} computes the exact calibrated parameters, expected days-to-alarm $\E[N_k]$, optimal proactive review age $\tau^*$, daily cost rate, and efficiency gain $\mathcal{G}$ for four illustrative archetypes spanning a realistic range of $(\mu_{\mathrm{dev}},\sigma_{\mathrm{dev}})$. Stable seniors with low physiological drift and tight variability receive proactive reviews every $\sim 6$ months; high-risk chronic patients are scheduled less than a month apart; a post-acute recovery profile (negative drift, indicating return to baseline) yields a moderate-frequency schedule. Across all four phenotypes the efficiency gain over reactive acute care lies between 40\% and 68\%, confirming that the framework adapts schedule density to patient risk without any hand-tuning.

For phenotype \(g\), with parameters \((\mu_g,\sigma_g,L)\), define
\[
\delta_g=\sqrt{\mu_g^2+\sigma_g^2},
\qquad
p_g=\frac12\left(1+\frac{\mu_g}{\delta_g}\right),
\qquad
k_g=\left\lfloor\frac{L}{\delta_g}\right\rfloor.
\]
The phenotype-specific expected raw days-to-alarm is
$\displaystyle \E[N_{k_g}]
=
\begin{cases}
2\E[T_{k_g}], & k_g \text{ even},\\
2\E[T_{k_g}]-1, & k_g \text{ odd}.
\end{cases}$
The personalised schedule is
$\displaystyle \tau_g^*
=
\arg\min_{\tau\in\mathbb N}
J_g(\tau)$,
where \(J_g\) is computed from the phenotype-specific survival law
\(\bar F_{k_g,p_g}\).

\begin{table}[htbp]
\centering
\caption{Cross-phenotype personalisation ($L=4.0$, $C_F/C_P=10$). All quantities exact.}
\label{tab:phenotype}
\small
\begin{tabular}{lccccrrr}
\toprule
Phenotype & $\mu$ & $\sigma$ & $p$ & $k$ & $\E[N_k]$ (d) & $\tau^*$ (d) & $\mathcal{G}$ (\%) \\
\midrule
Stable senior        & 0.005 & 0.10 & 0.525 & 38 & 728 & 194 & 50.2 \\
Healthy adult        & 0.020 & 0.15 & 0.566 & 26 & 196 &  66 & 63.5 \\
High-risk chronic    & 0.050 & 0.20 & 0.621 & 18 &  74 &  28 & 68.3 \\
Post-acute recovery  &$-0.010$& 0.18 & 0.472 & 22 & 333 &  78 & 40.4 \\
\bottomrule
\end{tabular}
\end{table}

\subsection{Comparison with Fixed-Interval Clinical Schedules}\label{sec:case_fixed}

Most current RPM platforms use fixed-interval clinical schedules (weekly, biweekly, monthly, quarterly) chosen by clinical convention rather than by model-based optimisation. Table~\ref{tab:fixed_vs_adaptive} compares exact long-run cost (per day) for the healthy-adult baseline ($p=0.566$, $k=26$, $C_F/C_P=10$) under five fixed schedules and the $T_k$-optimal schedule of 66~days. Two findings stand out. First, very frequent fixed schedules (weekly, biweekly) are \emph{worse} than reactive care in this cost regime, because the cost of routine telemedicine consultations is paid every period without proportional risk reduction. Second, both the monthly and quarterly fixed schedules deliver substantial savings (34.5\% and 57.9\%), and the $T_k$-optimal schedule pushes savings to 63.5\% by selecting the cycle length analytically rather than by clinical convention. The $T_k$ framework therefore both improves on existing rule-of-thumb schedules and justifies whichever fixed schedule is closest to optimal in a given cost regime.

\begin{table}[htbp]
\centering
\caption{Long-run cost (per day) of fixed-interval clinical schedules vs.\ the $T_k$-optimal schedule (baseline patient, $C_F/C_P=10$).}
\label{tab:fixed_vs_adaptive}
\small
\begin{tabular}{lrrr}
\toprule
Policy & $\tau$ (days) & $J$/day & Savings (\%) \\
\midrule
Reactive (wait for excursion) & --- & 25.5 & --- \\
Fixed weekly                  & 7   & 83.3 & $-227$ \\
Fixed biweekly                & 14  & 35.7 & $-40$ \\
Fixed monthly                 & 30  & 16.7 & 34.5 \\
Fixed quarterly               & 90  & 10.7 & 57.9 \\
$T_k$-optimal                 & 66  &  9.3 & 63.5 \\
\bottomrule
\end{tabular}
\end{table}

\subsection{Triage Classification Performance}\label{sec:case_triage}

For a current signed state \(s\) and planning horizon \(H\), define the exact
triage risk score
$\displaystyle \pi_H(s)
=
\Prob(N_k^{(s)}\le H)
=
1-\mathbf e_s^\top \mathbf P_{\rm sgn}^{\,H}\mathbf 1$.
Given a decision threshold \(\theta\), the binary triage rule is
$\displaystyle D_\theta(s)
=
\mathbf 1\{\pi_H(s)\ge \theta\}$.
For simulated labels
$\displaystyle Y_i=\mathbf 1\{N_{k,i}^{(s_i)}\le H\}$,
the empirical operating characteristics are
\[
{\rm Sens}(\theta)
=
\frac{\sum_i\mathbf 1\{D_\theta(s_i)=1,Y_i=1\}}
{\sum_i\mathbf 1\{Y_i=1\}},\qquad
{\rm Spec}(\theta)
=
\frac{\sum_i\mathbf 1\{D_\theta(s_i)=0,Y_i=0\}}
{\sum_i\mathbf 1\{Y_i=0\}},
\]
\[
{\rm PPV}(\theta)
=
\frac{\sum_i\mathbf 1\{D_\theta(s_i)=1,Y_i=1\}}
{\sum_i\mathbf 1\{D_\theta(s_i)=1\}},\qquad
{\rm NPV}(\theta)
=
\frac{\sum_i\mathbf 1\{D_\theta(s_i)=0,Y_i=0\}}
{\sum_i\mathbf 1\{D_\theta(s_i)=0\}}.
\]

A common operational question in RPM is binary triage: \emph{will this patient cross threshold in the next $H$ days?} We translate the signed-chain RUL into a probability score $\hat\pi(s)=\Prob(N_k^{(s)}\le H)$ and flag patients whose score exceeds threshold $\theta$. To evaluate this triage rule we simulate $n=8000$ independent patient trajectories at the baseline configuration, sample one random snapshot per patient, label it positive if the true trajectory crossed within the subsequent $H=30$~days, and compute classification metrics across thresholds (Table~\ref{tab:triage}). At $\theta=0.20$ the rule attains sensitivity~0.87 and specificity~0.85 at a base prevalence of 19\%, with high negative predictive value (NPV~0.97) across the threshold range---the latter is particularly relevant for RPM, where a clinician's main concern is \emph{ruling out} imminent escalation. The fully exact RUL distribution lets the operating threshold $\theta$ be dialled to match capacity constraints without re-training. These metrics characterise the model's discriminative behaviour under the synthetic data-generating process; corresponding figures on real cohorts would require external validation.

\begin{table}[htbp]
\centering
\caption{Triage classification metrics at $H=30$ days (baseline patient, 8000 snapshots, base prevalence 18.8\%).}
\label{tab:triage}
\small
\begin{tabular}{ccccccc}
\toprule
$\theta$ & Flagged (\%) & Sens.\ & Spec.\ & PPV & NPV \\
\midrule
0.05 & 43.5 & 0.97 & 0.69 & 0.42 & 0.99 \\
0.10 & 35.9 & 0.94 & 0.78 & 0.49 & 0.98 \\
0.20 & 28.6 & 0.87 & 0.85 & 0.57 & 0.97 \\
0.30 & 21.4 & 0.76 & 0.91 & 0.66 & 0.94 \\
0.50 & 14.7 & 0.59 & 0.96 & 0.75 & 0.91 \\
\bottomrule
\end{tabular}
\end{table}

\subsection{Cost-Asymmetry Sensitivity Across Clinical Contexts}\label{sec:case_cost}

Different clinical settings present markedly different cost asymmetries: chronic outpatient monitoring (cheap acute care, $C_F/C_P\!\sim\!2$--$5$) sits at one extreme; tele-ICU and step-down readmission risk ($C_F/C_P\!\sim\!50$--$100$) at the other. Table~\ref{tab:cost_scan} sweeps $C_F/C_P$ across this range for the baseline patient. The optimal intervention age contracts monotonically from 268 to 44 days as the cost ratio grows, while the efficiency gain over reactive care expands from a fraction of a percent to nearly 95\%. The framework therefore degrades gracefully when preventive intervention is barely cost-effective (recommending near-reactive schedules) and intensifies appropriately when acute care is expensive (recommending aggressive proactive review).

\begin{table}[htbp]
\centering
\caption{Optimal intervention age and efficiency gain across clinical cost contexts (baseline patient $p=0.566$, $k=26$).}
\label{tab:cost_scan}
\small
\begin{tabular}{rlrr}
\toprule
$C_F/C_P$ & Clinical interpretation & $\tau^*$ (d) & $\mathcal{G}$ (\%) \\
\midrule
2   & Acute care nearly as cheap     & 268 & 0.2 \\
5   & Routine outpatient escalation  &  84 & 38.2 \\
10  & Standard chronic care          &  66 & 63.5 \\
20  & Diabetes/cardio escalation     &  56 & 79.3 \\
50  & Tele-ICU step-down             &  48 & 90.6 \\
100 & ICU readmission risk           &  44 & 94.9 \\
\bottomrule
\end{tabular}
\end{table}

\subsection{Population-Level Cohort Burden}\label{sec:case_cohort}

In RPM deployments, the relevant operational metric is the per-patient cost burden aggregated over a heterogeneous cohort. We simulate $N{=}1{,}000$ synthetic patient trajectories drawn uniformly from the four phenotypes of Table~\ref{tab:phenotype} over a 180-day horizon ($C_P{=}\$500$, $C_F{=}\$5{,}000$ per encounter; values are illustrative and not tied to any specific reimbursement schedule). Each patient receives one of five scheduling policies. Table~\ref{tab:cohort} reports mean per-patient cost (standard error) and the average number of acute events per patient. The personalised $T_k$ policy---which assigns each patient the $\tau^*$ derived from their own phenotype---cuts both cost and acute-event burden by approximately a factor of two compared to the best fixed-interval policy, and by roughly 10$\times$ on acute events compared to reactive care. The one-size-fits-all $T_k$ policy (every patient receives the baseline 66-day schedule) is intermediate, illustrating that the gains come specifically from personalisation, not just from any adaptive scheduling.

For patient \(i\), let \(C_i^\pi(180)\) denote the total cost over the
180-day horizon under policy \(\pi\), and let \(A_i^\pi(180)\) denote the
number of acute events. The reported cohort burden is
$\displaystyle \widehat C^\pi
=
\frac1N\sum_{i=1}^N C_i^\pi(180)$,
with standard error
\[
{\rm SE}(\widehat C^\pi)
=
\sqrt{
\frac{1}{N(N-1)}
\sum_{i=1}^N
\left(C_i^\pi(180)-\widehat C^\pi\right)^2
}.
\]
The acute-event burden is
$\displaystyle \widehat A^\pi
=
\frac1N\sum_{i=1}^N A_i^\pi(180)$.

\begin{table}[htbp]
\centering
\caption{Cohort burden over 180~days, $N=1{,}000$ patients drawn uniformly from the four phenotypes ($C_P{=}\$500$, $C_F{=}\$5{,}000$, illustrative units).}
\label{tab:cohort}
\small
\begin{tabular}{lrrr}
\toprule
Policy & Mean cost/pt & SE & Acute events/pt \\
\midrule
Reactive (no preventive) & \$3{,}780 & 147 & 0.756 \\
Fixed monthly (30~d)     & \$3{,}243 &  33 & 0.054 \\
Fixed quarterly (90~d)   & \$3{,}460 & 133 & 0.537 \\
$T_k$ one-size (66~d)    & \$2{,}788 & 117 & 0.378 \\
$T_k$ personalised       & \$1{,}612 &  57 & 0.082 \\
\bottomrule
\end{tabular}
\end{table}

\subsection{Calibration-Window Length: Sample-Size Requirements}\label{sec:case_window}

A practical concern for RPM deployment is how much sensor data is needed to calibrate the model reliably from streaming wearable input. Table~\ref{tab:window} reports the distribution of the estimated optimal intervention age $\hat\tau^*$ and the cost penalty under true dynamics when calibration uses $N$ days of historical data (200 Monte Carlo replicates per $N$, baseline patient). Two weeks of data are clearly insufficient (35\% median cost penalty, very wide IQR); one month is workable but noisy; from 60~days onwards the median estimate locks onto the true value $\hat\tau^*\approx 66$~days and the penalty stabilises near 10\%. These results suggest that in the present generative regime, three to six months of calibration data are needed for stable personalisation; the corresponding requirement for real biosignal streams would depend on their actual noise structure.

\begin{table}[htbp]
\centering
\caption{Effect of calibration-window length $N$ on policy quality (200 Monte Carlo replicates per row, baseline patient).}
\label{tab:window}
\small
\begin{tabular}{rcccr}
\toprule
$N$ (d) & Median $\hat\tau^*$ (d) & IQR (d) & Median penalty (\%) & 90th pct (\%) \\
\midrule
14  & 48 & [32--74] & 35.1 & 124.1 \\
30  & 58 & [42--94] & 23.8 &  82.8 \\
60  & 66 & [48--94] & 19.2 &  56.2 \\
90  & 64 & [50--86] & 11.7 &  54.0 \\
180 & 62 & [52--80] &  8.7 &  49.0 \\
\bottomrule
\end{tabular}
\end{table}

\subsection{Treatment-Effect Detection}\label{sec:case_treat}

A complementary scenario to non-stationary deterioration (Section~\ref{sec:case_rolling}) is non-stationary \emph{improvement}: a patient who responds favourably to therapy should generate an adaptive relaxation of the monitoring schedule. We simulate a patient whose drift improves from $\mu{=}0.05$ (high-risk phenotype, $\tau^*{=}34$~days) to $\mu{=}0.02$ (healthy-adult phenotype, $\tau^*{=}66$~days) at day~90. Under rolling-window calibration ($W{=}60$), the drift change is detected with 100\% reliability across 200 Monte Carlo trials, with median latency~18 days (IQR $[3,36]$, 90th percentile~56). The clinical impact is a $+94\%$ relaxation of the proactive review schedule (from every 34~days to every 66~days), showing that the model intensifies monitoring in response to simulated deterioration and \emph{de-escalates} in response to simulated improvement---a symmetry that, if reproduced on real biosignal streams, would be operationally valuable.

\begin{figure}[htbp]
\centering
\includegraphics[width=0.5\linewidth]{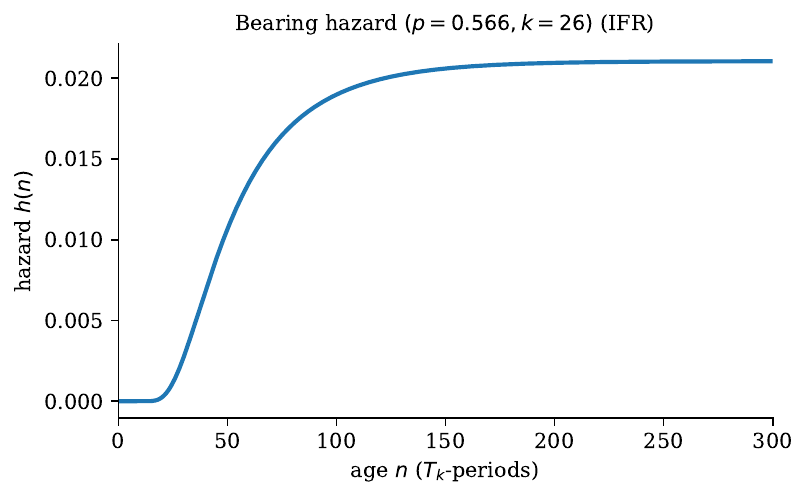}\\[1pt]
{\small (a) Exact hazard function (nondecreasing: IFR confirmed)}\\[5pt]
\includegraphics[width=0.5\linewidth]{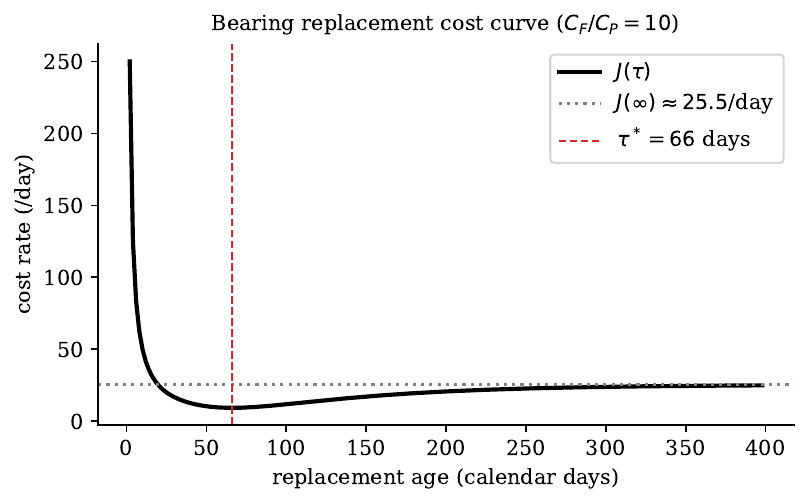}\\[1pt]
{\small (b) Preventive-intervention cost curve ($C_F/C_P{=}10$)}
\caption{Digital-health case study: PDI monitoring with $(p,k){=}(0.566,26)$.}
\label{fig:case_pdi}
\end{figure}

\section{Discussion and Conclusion}
\label{sec:discussion}

\subsection*{Applied-Mathematical Interpretation}

This paper develops a finite-state first-passage framework for two-sided
discrete monitoring systems. The motivating application is remote patient
monitoring, but the mathematical structure is broader: a centred state variable
evolves inside a bilateral control band, and the operational lifetime is the
first epoch at which the process exits this band. The resulting problem combines
applied probability, Markov-chain absorption, ageing properties of lifetime
distributions, exact matrix computation, and renewal-type preventive scheduling.

The parity-corrected gambler's-ruin duration \(T_k\) provides the structural
lifetime primitive. Its role is not merely to approximate a physiological
trajectory, but to supply a mathematically tractable first-passage distribution
with guaranteed ageing properties. The IFR and NBU properties imply that the
model-implied conditional crossing risk is nondecreasing in skeleton time, and
therefore provide a transparent monotonicity principle for intervention timing.
The total-positivity verification of the parity skeletons makes this ageing
structure a property of the finite-state transition kernels rather than an
empirical numerical observation.

A second contribution is the separation between skeleton-time analysis and
raw-time operational computation. The parity-corrected lifetime \(T_k\) is used
for structural ageing results, while patient-state-specific RUL is computed in
raw telemetry time from the signed transient chain. This avoids the parity
ambiguity of the raw origin-start hitting time and permits direct computation
from an observed signed state \(s_t\). Thus the framework distinguishes clearly
between the mathematical time scale used for ageing analysis and the operational
time scale used for scheduling.

The exact matrix formulation is central to the framework. Survival
probabilities, horizon-specific crossing risks, mean RUL, RUL quantiles,
geometric persistence rates, and preventive-review cost functions are all
computed from substochastic finite-state matrices. This gives a deterministic
and reproducible alternative to simulation-based RUL estimation. Simulation is
used only for diagnostic comparison and stress testing; the decision quantities
themselves are obtained by finite-dimensional linear algebra.

\subsection*{Implications for Monitoring and Scheduling}

The proposed framework gives a computationally explicit map from a monitoring
stream to a preventive scheduling decision:
\[
X_{1:t}
\longmapsto
(p_t,k_t,s_t)
\longmapsto
\Pr(N_{k_t}^{(s_t)}>n),\quad
\pi_t(H),\quad
\mathrm{RUL}_{t,\alpha}(s_t)
\longmapsto
\tau_t^\star,\ d_t .
\]
The effective drift \(p_t-1/2\), buffer \(k_t\), and signed state \(s_t\)
summarise the current monitoring regime. The signed-chain survival formula then
converts these quantities into RUL and horizon-risk outputs. Finally, the
renewal-cost objective converts survival information into an optimal preventive
review age.

This structure is useful in applications where both under-monitoring and
over-monitoring are costly. If the monitored trajectory is close to a boundary,
or if the calibrated drift points toward a boundary, the horizon-crossing
probability increases and the optimal review age shortens. If the trajectory
returns toward baseline or the effective buffer expands, the model de-escalates
the review schedule. Thus the same mathematical machinery supports both
escalation and de-escalation.

The numerical experiments support three main observations. First, the exact
matrix computations agree with Monte Carlo simulation of the underlying biased
walk, validating the implementation of the stochastic core. Second, the
preventive-review schedule obtained from the discrete first-passage model
outperforms fixed-interval and reactive baselines under the tested cost
settings. Third, the adaptive calibration layer responds symmetrically to
volatility deterioration and drift improvement, while the autocorrelation
stress test identifies the weak-dependence region in which the i.i.d. skeleton
is reliable. These experiments are not intended as clinical validation; they
demonstrate the mathematical and computational behaviour of the framework under
controlled conditions.

\subsection*{Clinical and Application Scope}

Although the motivating examples come from digital health, the paper should be
read primarily as an applied-mathematical modelling and computation study. The
framework is intended for risk computation, RUL estimation, and scheduling
support. It does not diagnose disease, prescribe treatment, or replace expert
judgement. Any patient-facing or clinician-facing deployment would require
external validation on real biosignal data, prospective evaluation, and review
under the applicable regulatory framework.

This distinction is important because clinical decision-support systems are
evaluated not only by mathematical correctness but also by intended use,
transparency, and the ability of healthcare professionals to independently
review the basis of recommendations. Regulatory guidance on clinical
decision-support software emphasises that the degree of oversight depends on
the intended use and whether the user can independently review the basis for
the recommendation. In the present paper, the
reported decision classes are therefore interpreted only as model-based
scheduling categories. The finite-state quantities in the explanation vector
are designed to make the mathematical basis of the recommendation inspectable,
but they do not constitute clinical evidence by themselves.

The empirical part of the study is synthetic. Its purpose is to verify exact
computation, benchmark scheduling behaviour, and demonstrate the end-to-end
workflow. It does not establish safety, efficacy, diagnostic accuracy, or
clinical benefit. Transparent reporting standards for prediction models, such
as TRIPOD+AI, emphasise complete reporting of model inputs, intended use,
evaluation design, and limitations; although the present study is not a
real-data clinical prediction-model validation study, the same transparency
principle motivates the reproducibility and scope statements included here.

\subsection*{Limitations}

Several limitations follow from the modelling assumptions.

\begin{itemize}
\item \textbf{Synthetic evidence.}
All numerical experiments are based on explicitly specified stochastic
processes or synthetic monitoring trajectories. They validate the internal
mathematical and computational behaviour of the framework, but do not establish
clinical performance on real patients or real biosignal streams.

\item \textbf{Effective calibration.}
The moment-matching map from rolling drift and volatility estimates to
\((p_t,k_t)\) is an effective projection onto a two-point decision skeleton. It
does not assert that physiological increments are literally Bernoulli. The
projection is useful because it yields exact first-passage and RUL computation,
but it does not preserve the full increment distribution.

\item \textbf{Independence assumption.}
The baseline stochastic core assumes locally independent increments after
calibration. The autocorrelation stress test shows that positive persistence
can substantially shorten crossing times relative to the i.i.d. approximation.
Strongly dependent signals require a Markov-modulated, semi-Markov, or
state-space extension.

\item \textbf{Symmetric two-sided boundaries.}
The model is designed for centred variables with bilateral action boundaries.
It is not appropriate for strictly monotone degradation or disease progression,
where a one-sided first-passage or degradation model is more natural.
Asymmetric boundaries and moving thresholds are important extensions.

\item \textbf{Operating-parameter choice.}
The half-band \(L_t\), rolling window \(W\), horizon \(H\), cost parameters
\(C_P,C_F\), and triage thresholds \(\theta_1,\theta_2,\theta_3\) are not
universal constants. They encode application-specific risk tolerance,
inspection cost, false-alarm burden, and operational capacity.

\item \textbf{Finite-state discretisation.}
The effective boundary is \(k_t\delta_t\), where
\(k_t=\lfloor L_t/\delta_t\rfloor\). Hence the discretisation introduces a
barrier error bounded by one effective step size. Finer lattices or adaptive
rounding rules may reduce this bias.
\end{itemize}

\subsection*{Future Work}

The present framework suggests several extensions in discrete first-passage
dynamics. A first direction is to replace the symmetric absorbing interval
\((-k,k)\) by asymmetric boundaries \((-k_-,k_+)\). The associated stopping
time
\[
N_{k_-,k_+}
=
\inf\{n\geq 0:S_n\notin(-k_-,k_+)\}
\]
allows the two exit directions to represent distinct failure mechanisms or
intervention requirements. Besides producing different lower- and upper-exit
probabilities, asymmetry destroys part of the parity symmetry used in the
present analysis. It therefore raises new questions concerning the
construction of appropriate skeleton chains, total positivity of their
transition kernels, preservation of IFR or NBU properties, and the parity of
optimal intervention epochs. Moving or state-dependent boundaries
\(k_\pm(n)\) would further lead to discrete first-passage problems with
time-dependent absorbing sets.

A second direction concerns dependence and regime switching. The current
random-walk core assumes locally independent increments with a fixed transition
probability \(p\). Serial dependence, circadian variation, treatment effects,
and persistent environmental regimes can instead be represented by a
Markov-modulated process
\[
\Pr(S_{n+1}=S_n+1\mid Z_n=z)=p_z,
\qquad
\Pr(Z_{n+1}=z'\mid Z_n=z)=R_{zz'},
\]
where \(Z_n\) is a finite-state latent regime. The augmented process
\((S_n,Z_n)\) remains Markovian, so its first-passage distribution can still be
computed from a finite substochastic matrix. Important open problems include
determining when ageing properties survive regime modulation, quantifying how
persistent regimes alter the spectral decay rate, and deriving intervention
rules that depend jointly on the observed state and the inferred regime.
Semi-Markov modulation would permit nongeometric regime durations, although at
the cost of enlarging the state space.

A related extension is to allow explicitly nonstationary transition kernels.
If the calibrated parameters change with time, the frozen-parameter survival
formula must be replaced by
\[
\Pr(N>H\mid S_t=s)
=
\mathbf e_s^\top
\mathbf P_t\mathbf P_{t+1}\cdots
\mathbf P_{t+H-1}\mathbf 1.
\]
This product of substochastic matrices defines a nonautonomous discrete
dynamical system. Its analysis requires stability bounds for matrix products,
time-dependent persistence rates, and intervention rules that remain robust
under parameter drift. Such results would provide a principled alternative to
freezing the most recent rolling-window estimate over the entire prediction
horizon.

A third mathematical direction is spectral and quasi-stationary analysis. For
a fixed transient matrix \(\mathbf Q\), long-horizon survival is controlled by
its Perron root and associated left and right eigenvectors. This suggests
studying expansions of the form
\[
\alpha\mathbf Q^n\mathbf 1
=
c_1\rho(\mathbf Q)^n
+
O\!\left(|\lambda_2(\mathbf Q)|^n\right),
\]
where \(\rho(\mathbf Q)\) is the dominant eigenvalue and
\(\lambda_2(\mathbf Q)\) is the subdominant eigenvalue. Such an expansion would
quantify the convergence of the exact survival ratio to its geometric limit,
explain the tightness of local persistence envelopes, and identify the
quasi-stationary distribution of the system conditional on nonabsorption.
Perturbation theory could then be used to measure the sensitivity of survival,
hazard, and optimal intervention age to changes in \(p\), the boundary size,
or individual entries of the transition kernel.

A fourth direction is multivariate and network-based monitoring. Many natural
and engineered systems are described by several interacting state variables
rather than by one scalar deviation. Possible discrete formulations include a
walk on a multidimensional lattice with an absorbing region
\(\mathcal A\subset\mathbb Z^d\), a finite-state process on a graph, or a system
of competing first-passage channels. For a transient kernel
\(\mathbf Q_{\mathcal A}\), the formal expressions
\[
\Pr(\tau_{\mathcal A}>n)
=
\alpha\mathbf Q_{\mathcal A}^{\,n}\mathbf 1,
\qquad
\E[\tau_{\mathcal A}]
=
\alpha(\mathbf I-\mathbf Q_{\mathcal A})^{-1}\mathbf 1
\]
remain valid, but direct matrix computation may become prohibitive as the
dimension grows. This motivates sparse linear algebra, state aggregation,
tensor methods, low-rank approximations, and structure-preserving model
reduction. A central challenge is to reduce computational complexity without
destroying the absorbing geometry or the monotonicity properties needed for
interpretable risk assessment.

The statistical calibration layer also warrants further development. The
moment-matching map used here is transparent and computationally inexpensive,
but it does not quantify parameter uncertainty or exploit the complete
increment sequence. Maximum-likelihood, Bayesian, and hidden-state methods
could provide posterior or confidence distributions for \(p\), \(k\), and the
current signed state. The predictive survival law would then be obtained by
averaging over parameter uncertainty, for example,
\[
\Pr(N>H\mid\mathcal D_t)
=
\sum_k\int
\mathbf e_{s}^{\top}
\mathbf P_{\mathrm{sgn}}(p,k)^H\mathbf 1\,
\pi(dp,dk,ds\mid\mathcal D_t),
\]
where \(\mathcal D_t\) denotes the observations available at time \(t\).
Corresponding robust or Bayesian intervention rules could minimize posterior
expected cost rather than cost under a single plug-in estimate. It would also
be useful to replace the discontinuous floor rule for \(k\) by a calibrated
mixture over adjacent lattice resolutions or by an adaptive-lattice
construction.

Finally, the synthetic remote-monitoring example should be followed by
application-specific empirical validation. A suitable programme would begin
with retrospective evaluation on a clearly defined longitudinal biosignal,
including an explicit baseline, sampling interval, action boundary, and
clinically meaningful endpoint. Evaluation should distinguish discrimination
from calibration and should report event-time prediction error, survival
calibration, false-alarm burden, subgroup stability, and policy regret relative
to existing review schedules. A subsequent shadow-mode study could assess
workflow consequences without changing patient care. Prospective clinical
evaluation would be necessary before any patient-facing use. These empirical
steps are separate from the mathematical validation provided in the present
paper, but they are essential for determining whether the proposed discrete
state representation is appropriate for a particular physiological process.

\subsection*{Conclusion}

This paper has developed a finite-state first-passage framework for two-sided
discrete monitoring systems. The mathematical core is a biased random walk with
symmetric absorbing barriers, together with the parity structure generated by
unit lattice increments. The parity-corrected lifetime \(T_k\) provides the
appropriate object for structural ageing analysis, whereas the raw signed
hitting process \(N_k\) provides the appropriate state representation for
operational prediction. Maintaining this distinction separates skeleton-time
properties from decisions made at the original observation epochs.

The parity-skeleton formulation connects local transition-kernel structure
with global lifetime behaviour. For the kernels considered in this paper, the
likelihood-ratio verification yields an increasing-failure-rate lifetime and
the associated log-concavity and new-better-than-used consequences. These
properties explain the monotone evolution of the conditional crossing risk and
lead to computable geometric envelopes for long-horizon survival. They also
show that the ageing behaviour is a structural consequence of the underlying
discrete transition mechanism rather than a pattern inferred from simulated
hazard curves.

The signed transient chain provides an exact computational representation of
the monitoring process. Survival probabilities, finite-horizon crossing risks,
lifetime moments, and state-conditioned RUL are obtained from powers and
resolvents of a substochastic matrix. The same survival quantities enter a
renewal-cost objective for preventive intervention, thereby linking the
first-passage dynamics to an explicit discrete optimization problem. Because
these decision quantities are computed by deterministic linear algebra,
simulation is required only for implementation checks and robustness studies,
not for evaluating the fitted finite-state model itself.

The numerical experiments illustrate the consequences of drift, boundary
size, cost asymmetry, calibration error, nonstationarity, and serial
dependence. Within the controlled synthetic settings, the exact computations
agree with Monte Carlo diagnostics, the intervention cost exhibits the
predicted single-crossing behaviour, and model-based schedules can improve the
specified cost rate relative to reactive or fixed-interval policies. These
findings should be interpreted as properties of the assumed discrete
dynamical system. They do not establish diagnostic accuracy, clinical benefit,
or superiority on real patient data.

The principal contribution is therefore methodological: the paper supplies an
auditable route from a two-sided discrete state process to first-passage
survival, RUL, and intervention quantities while retaining the lattice and
parity effects that continuous approximations may conceal. The framework also
identifies a broader research programme involving asymmetric and moving
absorbing sets, regime-modulated and nonautonomous chains, spectral and
quasi-stationary behaviour, multidimensional state spaces, and uncertainty-aware
calibration. These extensions would deepen the connection between finite-state
stochastic dynamics and condition-based decision-making in natural, social,
and engineered monitoring systems.

\section*{Acknowledgments}
This research received no external funding.

\section*{Declaration of interest statement}
There are no competing interests to declare.

\section*{Declaration of generative AI use}
During the preparation of this article, the author used Gemini 3 to refine the language. The content has been reviewed and edited by the author to ensure accuracy.

\section*{Data availability statement}
Data sharing is not applicable as this study does not generate or analyze data.

\section*{Appendix}
\appendix
\section{Algebraic Verification of the LR Inequality}\label{app:lr}

We verify~\eqref{eq:lr} for both parity skeletons. Define $a_i = p^{2i}+q^{2i}$ for $i\ge 0$.

\textbf{Even-$k$ skeleton.} From~\eqref{eq:even_prob}, for $i\ge 1$:
\[
p_{i,i+1} = \frac{a_{i+1}}{a_i},\quad p_{i,i}=2pq,\quad p_{i,i-1}=\frac{p^{2i}q^2+q^{2i}p^2}{a_i}.
\]
At state $i{+}1$, analogously $p_{i+1,i+1}=2pq$, $p_{i+1,i}=(p^{2(i+1)}q^2+q^{2(i+1)}p^2)/a_{i+1}$. Computing the two ratios:
\begin{align*}
\frac{p_{i+1,i+1}}{p_{i,i+1}} &= \frac{2pq\,a_i}{a_{i+1}},\\
\frac{p_{i+1,i}}{p_{i,i}} &= \frac{p^{2(i+1)}q^2+q^{2(i+1)}p^2}{2pq\,a_{i+1}}= \frac{p^2 q^2(p^{2i}+q^{2i})}{2pq\,a_{i+1}} = \frac{pq\,a_i}{2\,a_{i+1}}.
\end{align*}
where the second equality uses $p^{2(i+1)}q^2 + q^{2(i+1)}p^2 = p^2q^2(p^{2i}+q^{2i})$. Hence
$\displaystyle \frac{\text{LHS}}{\text{RHS}} = \frac{2pq\,a_i/a_{i+1}}{pq\,a_i/(2 a_{i+1})} = 4$.
The boundary case $i=0$ uses $p_{0,0}=2pq$, $p_{0,1}=a_1=p^2+q^2$, $p_{1,0}=(p^2q^2+q^2p^2)/a_1=2p^2q^2/a_1$, $p_{1,1}=2pq$, giving
$\displaystyle \frac{p_{1,1}}{p_{0,1}}\bigg/\frac{p_{1,0}}{p_{0,0}} = \frac{2pq/a_1}{2p^2q^2/(a_1\cdot 2pq)} = \frac{2pq/a_1}{pq/a_1} = 2$,
which still satisfies the inequality $\text{LHS}\ge\text{RHS}$ strictly.

\textbf{Odd-$k$ skeleton.} The odd-$k$ transitions for $i\ge 2$ involve $a^{\mathrm{odd}}_i = p^{2i-1}+q^{2i-1}$:
\[
p_{i,i+1} = \frac{a^{\mathrm{odd}}_{i+1}}{a^{\mathrm{odd}}_i},\quad p_{i,i}=2pq,\quad p_{i,i-1}=\frac{p^{2i-1}q^2+q^{2i-1}p^2}{a^{\mathrm{odd}}_i}.
\]
By the same factorisation $p^{2i+1}q^2+q^{2i+1}p^2 = p^2q^2 a^{\mathrm{odd}}_i$, the ratio is again exactly~4 for all $i\ge 2$. The boundary $i=1$ uses $p_{1,1}=3pq$, $p_{1,2}=(p^3+q^3)/(p+q)$, giving LHS/RHS $=6$, again strictly satisfying~\eqref{eq:lr}.

In every case the inequality~\eqref{eq:lr} holds strictly. By Lemma~\ref{prop:kijima}, the parity skeleton is IFR, hence $T_k$ is IFR (Proposition~\ref{prop:IFR}).

\bibliography{2reference}

\end{document}